\newcommand{\be}{\begin{equation}} 
\newcommand{\ee}{\end{equation}}
\newcommand{\bea}{\begin{eqnarray}} 
\newcommand{\eea}{\end{eqnarray}}
\newcommand{\bean}{\begin{eqnarray*}} 
\newcommand{\eean}{\end{eqnarray*}}
\def\mn{|\!\!|}
\def\mn2{|\!\!|_{M^{d/2}}}
\newtheorem{theorem}{Theorem}
\newtheorem{lemma}[theorem]{Lemma}
\theoremstyle{definition}
\theoremstyle{remark}
\newtheorem{remark}[theorem]{Remark}
\numberwithin{equation}{section}
\numberwithin{theorem}{section}
\newcommand{\normA}[2][]{\left\lVert #2 \right\rVert_{#1}}
\author[P. Knosalla]{Piotr Knosalla}
\address[P. Knosalla]{
Institute of Physics, Opole University, Oleska 48, 45-052 Opole, Poland\\
ORCID: 0000-0002-3594-0938}
\email{piotr.knosalla@uni.opole.pl}
\thanks{}
\title[Chemotaxis-consumption-
growth model]{
The impact of Robin boundary condition on a chemotaxis-consumption-growth model}
\newcommand\norm[1]{\left\lVert#1\right\rVert}
\begin{document}
\begin{abstract}
We investigate a parabolic-elliptic chemotaxis-consumption-growth system with a Robin boundary condition imposed on the signal. First, we analyse the steady state problem, then  we show that the solutions of the system are global and uniformly bounded in time in any space dimension. Next, under smallness assumption on the boundary data, we show that the solutions converge to non-constant steady states as time tends to infinity. \end{abstract}

\keywords{chemotaxis, consumption of chemoattractant, global existence, non constant steady states, convergence to steady states}

\subjclass[2020]{35J66, 35B40, 35A01, 35K20, 92C17, 35Q92}

\date{\today}
\maketitle

\baselineskip=16pt

\section{Introduction}

Consider a bounded domain $\Omega\subset \mathbb{R}^n$, for $n\geq 1$ with the boundary of the class $C^{2,\alpha}$, which is filled with a fluid.
A colony of bacteria occupies the region $\Omega$ and its distribution is described by the density function $u(x,t)$, for $x\in\bar{\Omega}$ and $t\geq 0$. The bacteria walk randomly and follow the highest concentration of nutrient that is dissolved in the fluid and distributed according to the density $v(x,t).$ We assume that the colony grows logistically, the nutrient diffuses in the fluid and is consumed by the bacteria. In this paper we are interested in the following parabolic-elliptic system with non-homogeneous Robin boundary condition imposed on the signal,  

\begin{equation}\label{systPE}
\left\{ \begin{array}{l}
 u_t=\nabla\cdot(\nabla u-u\nabla v)+\lambda u-\mu u^2, \\
 \partial_\nu u-u\partial_\nu v|_{\partial\Omega}=0,\\
 \Delta v=vu,\\
 \partial_\nu v|_{\partial\Omega}=(\gamma-v)g\\
 u(\cdot,0)=u_0>0,
 \end{array} \right.
\end{equation}
which describes the evolution of the model.
Here $g\in C^{1,\alpha}(\partial\Omega)$ is a positive function and $\lambda,\mu,\gamma$ are  positive parameters. Function $u_0$ is a given initial density of bacteria.
The system \eqref{systPE} can be seen as a simplification of the system proposed by Tuval et al. in \cite{Tuval}  with neglected fluid velocity and high diffusion of the signal.  

Lankeit and Wang analysed a fully parabolic version of the system \eqref{systPE} under homogeneous Neumann condition in \cite{LankeitWang}, they obtained global existence of solutions and their convergence to the unique nonnegative steady state $(\frac{\lambda}{\mu}, 0),$ when time goes to infinity. 
Introducing non-homogeneous boundary conditions in this system makes the model more realistic.
The steady state problem of \eqref{systPE} under the Dirichlet boundary condition was investigated in \cite{KW}, and recently the global existence and asymptotic behaviour of solutions in two space dimensions were considered in \cite{KnLa}.  

The aim of this paper is to show that the results similar to \cite{KW} hold for the stationary problem of \eqref{systPE}, and due to the Robin boundary condition, global existence and convergence to steady state hold in any dimension of space.

 The stationary problems of the variants of system \eqref{systPE} without growth were considered in \cite{BrLank} under the Robin boundary condition, and in \cite{LeeWangYang, AhnLankeit, KNadz, Knos2} under the Dirichlet or Neumann boundary condition. The question of global existence and/or asymptotic behaviour of solutions was studied in \cite{TWin} under the no-flux boundary condition and in \cite{FLM, AhnKangLee}, \cite{LankeitWinkler, HongWang, YangAhn}, \cite{KNos,Knos1} under the Robin, Dirichlet or non-homogenous Neumann boundary condition. The local asymptotic stability of the steady states was investigated in \cite{LiLi}.  More references on chemotaxis-consumption systems can be found in the review paper by Lankeit and Winkler, \cite{LankeitWinkler}.

\section{Main results}

We begin our analysis of system \eqref{systPE} by considering the stationary problem. Namely, we are interested in the question of the existence and uniqueness of positive time-independent solutions of \eqref{systPE}. By $U(x)$, $V(x)$ we denote the stationary density of bacteria and chemoattractant respectively which solve the following system  
\begin{equation}\label{systSPE}
\left\{ \begin{array}{l}
 0=\nabla\cdot(\nabla U-U\nabla V)+\lambda U-\mu U^2, \\
 \partial_\nu U-U\partial_\nu V|_{\partial\Omega}=0,\\
 \Delta V=VU,\\
 \partial_\nu V|_{\partial\Omega}=(\gamma-V)g. 
 \end{array} \right.
\end{equation}
The pair $(U,V)$ is called a positive solution of \eqref{systSPE} if both its components are positive.
\begin{remark}
The pair $(0,\gamma)$ is the only nonnegative constant solution of \eqref{systSPE}. This solution is dynamically unstable.
\end{remark} 
Our first result is the following.
\begin{theorem}\label{systSPE:ex:thm}
There exists a positive solution $(U,V)\in(C^{2,\alpha}(\overline{\Omega}))^2$ of (\ref{systSPE}). Any such solution satisfy
\begin{equation}\label{std:thm:U}
\frac{\lambda}{\mu}e^{V(x)-\gamma}\leq U(x)\leq\frac{\lambda}{\mu}e^{V(x)},\quad\text{ for all }x\in\overline{\Omega}
\end{equation}
\begin{equation}\label{std:thm:V}
0< V(x)<\gamma,\quad\text{ for all }x\in\overline{\Omega}
\end{equation}
Moreover there exists a constant $\gamma^\star$ such that for $\gamma\in(0,\gamma^\star)$ the pair $(U,V)$ is a unique positive solution of \eqref{systSPE}.
\end{theorem}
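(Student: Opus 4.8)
The plan is to treat the a priori bounds, existence, and uniqueness separately, exploiting that the flux in the first equation factorises as $\nabla U-U\nabla V=e^{V}\nabla(Ue^{-V})$, which motivates the substitution $w=Ue^{-V}$. I would first derive the bounds for an arbitrary positive solution. For \eqref{std:thm:V}, $V$ solves the linear problem $\Delta V-UV=0$, $\partial_\nu V+gV=\gamma g$ with $U>0$; since $\Delta V=UV$, a positive interior maximum is impossible (there $\Delta V>0$), so the maximum sits on $\partial\Omega$, where $\partial_\nu V\ge0$ and the boundary condition give $V\le\gamma$, strengthened to $V<\gamma$ by Hopf's lemma (as $V\equiv\gamma$ would force $U\equiv0$). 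For the lower bound, a negative interior minimum is likewise impossible, while a boundary minimum would give $V\ge\gamma$ by the boundary condition, contradicting $V<\gamma$; thus the minimum is a nonnegative interior value, and the strong minimum principle for $\Delta-U$ excludes $V\equiv0$, giving $V>0$.

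For \eqref{std:thm:U}, the substitution $w=Ue^{-V}$ turns the first equation into $\Delta w+\nabla V\cdot\nabla w+\lambda w-\mu e^{V}w^{2}=0$ with $\partial_\nu w=0$. Evaluating at an interior maximum gives $\mu e^{V}w\le\lambda$, hence $w\le\frac{\lambda}{\mu}e^{-V}\le\frac{\lambda}{\mu}$, and at an interior minimum $w\ge\frac{\lambda}{\mu}e^{-V}\ge\frac{\lambda}{\mu}e^{-\gamma}$; the Neumann condition $\partial_\nu w=0$ discards boundary extrema via Hopf's lemma (applied after moving the lower-order terms to the right-hand side, where they carry a definite sign at a putative extremal violation). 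Multiplying $\frac{\lambda}{\mu}e^{-\gamma}\le w\le\frac{\lambda}{\mu}$ by $e^{V}$ gives exactly \eqref{std:thm:U}, so $U$ is bounded away from $0$ and $\infty$ uniformly.

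For existence I would run a Schauder fixed-point argument on the two decoupled sub-problems. Given $U$ in the order interval $I=[\frac{\lambda}{\mu}e^{-\gamma},\frac{\lambda}{\mu}e^{\gamma}]$, the Robin problem $\Delta V-UV=0$, $\partial_\nu V+gV=\gamma g$ is linear and coercive on $H^{1}(\Omega)$—the boundary term $\int_{\partial\Omega}gV^{2}$ secures coercivity because $g>0$—so it has a unique solution $V=V[U]\in(0,\gamma)$, smooth by elliptic regularity. Given $V\in(0,\gamma)$, the logistic equation for $w$ admits $\frac{\lambda}{\mu}e^{-\gamma}$ and $\frac{\lambda}{\mu}$ as ordered sub- and supersolutions, so the sub-supersolution method yields a solution, unique because $w\mapsto\lambda-\mu e^{V}w$ is strictly decreasing; this defines $U=U[V]$, and by \eqref{std:thm:U} it again lies in $I$. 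I would then check that $T(U)=U[V[U]]$ maps $I$ (intersected with a Hölder ball) continuously and compactly into itself, compactness being furnished by the $C^{2,\alpha}$ Schauder estimates and the embedding $C^{2,\alpha}\hookrightarrow C^{\alpha}$, and invoke Schauder's theorem to obtain a fixed point, i.e. a positive solution.

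Finally, for uniqueness I would show $T$ is a contraction once $\gamma$ is small. The point is that $V[U]$ depends only weakly on $U$: if $V_{i}=V[U_{i}]$ then $\Phi=V_{1}-V_{2}$ solves $\Delta\Phi-U_{1}\Phi=V_{2}(U_{1}-U_{2})$, $\partial_\nu\Phi+g\Phi=0$, and since $\|V_{2}\|_{\infty}<\gamma$ the elliptic estimate gives $\|V_{1}-V_{2}\|\le C\gamma\,\|U_{1}-U_{2}\|$, whereas $V\mapsto U[V]$ is Lipschitz with constant bounded uniformly for $\gamma$ in a compact range. Composing, $T$ has Lipschitz constant $O(\gamma)$, so there is $\gamma^{\star}>0$ making $T$ a contraction for $\gamma\in(0,\gamma^{\star})$, which forces the fixed point, hence the positive solution, to be unique. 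The step I expect to be most delicate is the existence argument: establishing uniform Schauder bounds and the continuous dependence of the logistic solution on the coefficient $V$ (so that $T$ is genuinely continuous and compact), while ensuring the order interval stays invariant at the degenerate endpoints $V\in\{0,\gamma\}$ where the sub- and supersolution collapse.
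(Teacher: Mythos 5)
Your a priori bounds and your existence argument follow essentially the paper's route: the same substitution $w=Ue^{-V}$, the same splitting into a signal subproblem and a logistic Neumann subproblem, maximum-principle/Hopf arguments for \eqref{std:thm:U}--\eqref{std:thm:V} (these match Lemma \ref{V:ex:lem} and the cited Lemma \ref{W:ex:est} from \cite{KW}), and a Schauder fixed point on an order interval. The one genuine structural difference is in your favor: by parametrizing the signal equation by $U$ rather than by $w$, your $V$-subproblem $\Delta V=UV$, $\partial_\nu V+gV=\gamma g$ is \emph{linear}, so Lax--Milgram plus elliptic regularity suffices, whereas the paper's subproblem $\Delta V=VWe^{V}$ is nonlinear in $V$ and forces a Leray--Schauder argument with separate $\sigma$-dependent a priori bounds. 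Either way the fixed-point sets are the same up to the change of variable, and both existence proofs go through.

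The uniqueness part is where you genuinely diverge from the paper, and where your argument has a gap. The paper proves uniqueness by an energy method: it tests the equations for the differences $\tilde U$, $\tilde V$ of two solutions, uses the trace inequality \eqref{trace:est}, and arrives at $\mathcal{F}_1(\gamma)\int_\Omega|\nabla\tilde U|^2+2\mu\mathcal{F}_2(\gamma)\int_\Omega\tilde U^2\le 0$ with $\mathcal{F}_1,\mathcal{F}_2>0$ for small $\gamma$ (Lemma \ref{uniq:std}); no quantitative continuity of any solution map is needed. Your contraction argument instead rests on the claim that $V\mapsto U[V]$ is Lipschitz with a constant uniform in $\gamma$ --- and this is asserted, not proved. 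It is also not supplied by the tools you invoke elsewhere: the sub/supersolution construction and the result quoted from \cite{KW} give only \emph{continuity} of the logistic solution map, and Lipschitz continuity of a solution map for a quasilinear problem is strictly more (it amounts to invertibility, with uniform bounds, of the linearization at the positive solution). This is the crux of your uniqueness proof; without it, ``$\mathrm{Lip}(T)=O(\gamma)$'' has no content. The gap is fillable for small $\gamma$: the difference $\omega=w_1-w_2$ of the two logistic solutions solves a linear Neumann problem whose zeroth-order coefficient equals $\lambda-\mu e^{V_1}(w_1+w_2)\le\lambda\bigl(1-2e^{-\gamma}\bigr)<0$ once $\gamma<\ln 2$, by the lower bound $w_i\ge\frac{\lambda}{\mu}e^{-\gamma}$, so the maximum principle yields $\normA[L^\infty(\Omega)]{\omega}\le C\normA[C^1(\overline{\Omega})]{V_1-V_2}$ with controlled $C$. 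Note, however, that the $C^1$ norm on the right is unavoidable, since the $w$-equation contains $\nabla V\cdot\nabla w$; consequently your estimate for the $V$-step must also be upgraded from the sup-norm bound you state to a $W^{2,p}$ ($p>n$) bound $\normA[C^1(\overline{\Omega})]{V[U_1]-V[U_2]}\le C\gamma\normA[L^\infty(\Omega)]{U_1-U_2}$, which elliptic regularity does provide. With these two estimates made explicit your contraction closes (and as a bonus reproves existence for small $\gamma$); as written, the key Lipschitz step is missing.
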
   
Our second result concerns the global existence and uniform boundedness of solutions of \eqref{systPE}.
\begin{theorem}\label{Evol1:Thm}
For $u_0>0$, $u_0\in W^{1,p}(\Omega)$ with $p>n$,  there exists a unique globally defined solution $(u,v)$ of \eqref{systPE} satisfying
\begin{align*}
u&\in C([0,\infty);W^{1,p}(\Omega))\cap C^{2,1}(\overline{\Omega}\times(0,\infty)),\\ 
v&\in C([0,\infty);C^{2,\sigma_*}(\overline{\Omega}))\cap C^{0,1}((0,\infty);C^{2,\sigma_*}(\overline{\Omega})),
\end{align*} 
for some $\sigma_*\in(0,1)$ with
$$
\sup_{t\in[0,\infty)}\normA[L^\infty(\Omega)]{u(t)}<\infty,
$$

$$
u(x,t)>0,\quad \gamma>v(x,t)>0\quad\text{ for } (x,t)\in\bar{\Omega}\times[0,\infty).
$$
\end{theorem}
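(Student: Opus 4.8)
The plan is to treat \eqref{systPE} as a single quasilinear parabolic equation for $u$ coupled to an elliptic solve for $v$. First I would establish local existence and uniqueness on a maximal interval $[0,T_{\max})$ together with the stated regularity. Given $u(\cdot,t)\in W^{1,p}(\Omega)$ with $p>n$, the embedding $W^{1,p}\hookrightarrow C^{0,\sigma}(\overline{\Omega})$ and elliptic theory for the Robin problem $\Delta v = uv$, $\partial_\nu v=(\gamma-v)g$ produce a unique $v=v[u]\in C^{2,\sigma}(\overline{\Omega})$ depending smoothly on $u$; substituting this into the $u$-equation yields a nonlocal quasilinear parabolic problem to which Amann's theory (or a contraction argument in $C([0,T];W^{1,p}(\Omega))$) applies, and standard parabolic bootstrapping gives the claimed regularity of $(u,v)$.

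The qualitative bounds come from maximum principles. Writing the $u$-equation in non-divergence form $u_t=\Delta u-\nabla v\cdot\nabla u-u\,\Delta v+\lambda u-\mu u^2$ and noting that $u\equiv 0$ solves it, the strong parabolic maximum principle gives $u>0$ on $(0,T_{\max})$ from $u_0>0$. For $v$ I would argue elliptically at each fixed time: since $u\ge 0$, the operator $\Delta-u$ obeys the maximum principle. If $v$ attained a nonpositive minimum on $\partial\Omega$, Hopf's lemma would force $\partial_\nu v<0$ there, contradicting $\partial_\nu v=(\gamma-v)g>0$, while an interior nonpositive minimum is excluded because $v$ would be a supersolution; hence $v>0$. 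Applying the same reasoning to $w=\gamma-v$, which satisfies $(\Delta-u)w=-\gamma u\le 0$ and $\partial_\nu w+wg=0$, yields $v<\gamma$.

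The heart of the proof is the uniform-in-time $L^\infty$ estimate. I would first integrate the $u$-equation over $\Omega$; the no-flux condition kills the boundary flux, leaving the logistic balance $\frac{d}{dt}\int_\Omega u=\lambda\int_\Omega u-\mu\int_\Omega u^2$, and Jensen's inequality turns this into a logistic differential inequality giving a time-uniform $L^1$ bound. Next, testing with $u^{p-1}$ and integrating by parts, the chemotactic cross term becomes $\frac{p-1}{p}\bigl(-\int_\Omega u^{p+1}v+\int_{\partial\Omega}u^p(\gamma-v)g\bigr)$. Here the consumption structure is favorable: $-\int_\Omega u^{p+1}v\le 0$ because $v>0$. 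The genuinely new difficulty, absent in the Neumann case, is the positive boundary term $\int_{\partial\Omega}u^p(\gamma-v)g$, which I expect to be the main obstacle. I would control it via the trace interpolation inequality $\|w\|_{L^2(\partial\Omega)}^2\le \varepsilon\|\nabla w\|_{L^2(\Omega)}^2+C_\varepsilon\|w\|_{L^2(\Omega)}^2$ applied to $w=u^{p/2}$, choosing $\varepsilon$ small enough to absorb the gradient into the diffusion term $-\frac{4(p-1)}{p^2}\int_\Omega|\nabla u^{p/2}|^2$; since $\gamma$ and $\|g\|_{L^\infty(\partial\Omega)}$ are fixed, this absorption is legitimate.

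After absorption one is left with $\frac{1}{p}\frac{d}{dt}\int_\Omega u^p+\mu\int_\Omega u^{p+1}\le C_p\int_\Omega u^p$, and Young's inequality together with the power-mean inequality converts this into $y'\le C-C'y^{(p+1)/p}$ for $y=\int_\Omega u^p$, whose superlinear dissipation yields a time-uniform bound on $\|u(t)\|_{L^p(\Omega)}$ for every finite $p$. Fixing some $q>n$, elliptic regularity applied to $\Delta v=uv$ (using $0<v<\gamma$) then bounds $\|\nabla v(t)\|_{L^\infty(\Omega)}$ uniformly in time, so the drift $u\nabla v$ is controlled; a Moser-type iteration, or the smoothing estimates of the Neumann heat semigroup applied to $u_t=\Delta u-\nabla\cdot(u\nabla v)+\lambda u-\mu u^2$, upgrades the $L^p$ bounds to $\sup_t\|u(t)\|_{L^\infty(\Omega)}<\infty$. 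Finally, this bound, being independent of $T_{\max}$, contradicts the blow-up alternative of the local theory, so $T_{\max}=\infty$ and the solution is global.
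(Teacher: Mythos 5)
Your proposal follows the same overall strategy as the paper and its core estimate is the same: both identify the Robin boundary term $\int_{\partial\Omega}u^p(\gamma-v)g$ as the genuinely new difficulty and both neutralize it identically, by applying the trace interpolation inequality \eqref{trace:est} to $u^{p/2}$ with $\varepsilon$ small enough to absorb the gradient into the diffusion term, while the consumption structure $\Delta v=uv\ge 0$ makes the bulk cross term harmless. The organization of the $L^\infty$ step differs, though: the paper runs a single Moser--Alikakos recurrence $K(p)\le[c_3^*(1+p)^n]^{1/p}K(p/2)$ directly from the $L^1$ bound, tracking all constants to be independent of $p$, and never needs any estimate on $\nabla v$; you instead first extract a uniform-in-time $L^p$ bound for each fixed $p$ from the logistic damping ($y'\le C-C'y^{(p+1)/p}$), then bound $\|\nabla v(t)\|_{L^\infty(\Omega)}$ by elliptic regularity, and only then iterate. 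Your route is standard in the chemotaxis literature and valid, just slightly more roundabout. Your positivity arguments (strong parabolic maximum principle for $u$, elliptic Hopf-lemma argument for $0<v<\gamma$) also work and are, for this theorem, simpler than the paper's ODE-comparison sub-solution in Lemma \ref{comp:lemma}; note however that the paper's quantitative lower bound $u\ge y(t)$ with $y'=\lambda y-(\mu+\gamma)y^2$ is reused in the proof of Theorem \ref{Evol2:Thm}, so it is not redundant there.

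Two of your steps need repair. First, in the local existence argument the map $u\mapsto v[u]$ is not well defined on all of $C([0,T];W^{1,p}(\Omega))$: for sign-changing $u$ the Robin problem $\Delta v=uv$, $\partial_\nu v=(\gamma-v)g$ need not be uniquely solvable, since the operator $\Delta-u$ then loses the maximum principle and may have a nontrivial kernel. The paper circumvents this by solving the truncated problem $\Delta v=v(u)_+$ and then invoking positivity of $u$ to remove the truncation; alternatively, since $W^{1,p}(\Omega)\hookrightarrow C^0(\overline{\Omega})$ for $p>n$ and $\inf_{\overline\Omega}u_0>0$, you could restrict your contraction to a ball on which $u\ge\tfrac12\inf u_0>0$. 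Second, your fallback via ``smoothing estimates of the Neumann heat semigroup'' does not apply here: $u$ does not satisfy a homogeneous Neumann condition, because $\partial_\nu u=u\partial_\nu v=u(\gamma-v)g\not\equiv 0$ on $\partial\Omega$, so the Duhamel representation with the Neumann semigroup is not legitimate (in the classical no-flux Keller--Segel setting it is, precisely because there $\partial_\nu v=0$ forces $\partial_\nu u=0$). Only your Moser-iteration alternative is sound, and when you run it the boundary terms reappear and must again be handled with \eqref{trace:est}, exactly as in the paper's Lemma \ref{Glo:ex}.
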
 
Our third result is about the convergence of solutions to steady states.  
\begin{theorem}\label{Evol2:Thm}
Let $(u,v)$ be a solution of \eqref{systPE} with $u_0>0$, and $(U,V)$ a positive solution of \eqref{systSPE}, then there exists a positive constant ${\gamma^\star}'(\lambda,\mu,\normA[L^{\infty}(\Omega)]{u})$ such that for $\gamma\in(0,{\gamma^\star}')$ and any $s\in[2,\infty)$  we have
$$
\normA[L^s(\Omega)]{u(t)-U}\rightarrow 0,\quad \normA[W^{2,s}(\Omega)]{v(t)-V}\rightarrow 0
$$
and the rate of convergence is exponential.
\end{theorem}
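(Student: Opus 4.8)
The plan is to work with the differences $\tilde u = u - U$ and $\tilde v = v - V$ and to prove exponential decay of $\|\tilde u(t)\|_{L^2(\Omega)}$, from which the full statement follows by interpolation and elliptic regularity. Subtracting the stationary equations from the evolution equations, $\tilde v$ solves the elliptic problem $\Delta\tilde v - U\tilde v = v\tilde u$ in $\Omega$ with the Robin condition $\partial_\nu\tilde v + g\tilde v = 0$ on $\partial\Omega$, while $\tilde u$ satisfies
\[
\tilde u_t = \nabla\cdot\big(\nabla\tilde u - u\nabla\tilde v - \tilde u\nabla V\big) + \tilde u\big(\lambda - \mu(u+U)\big),
\]
together with the no-flux condition $\partial_\nu\tilde u - u\partial_\nu\tilde v - \tilde u\partial_\nu V = 0$. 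The first key estimate is that, since $0 < v < \gamma$ by Theorem~\ref{Evol1:Thm}, elliptic regularity for the coercive operator $\Delta - U$ (with $U \ge 0$ and $g > 0$) yields $\|\tilde v\|_{W^{2,s}(\Omega)} \le C\gamma\|\tilde u\|_{L^s(\Omega)}$ for every $s \in [2,\infty)$; thus $\tilde v$ is slaved to $\tilde u$ and is small precisely when $\gamma$ is small.

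Next I would test the $\tilde u$-equation with $\tilde u$ and integrate by parts. The diffusion contributes $-\|\nabla\tilde u\|_2^2$. The chemotaxis cross term $\int_\Omega u\,\nabla\tilde u\cdot\nabla\tilde v$ is handled by Young's inequality together with the previous estimate, giving back half of the gradient term plus $C\gamma^2\|u\|_\infty^2\|\tilde u\|_2^2$; the drift term involving $\nabla V$, after integration by parts, produces a bulk term $-\tfrac12\int_\Omega\tilde u^2 VU$ and a boundary term $\tfrac12\int_{\partial\Omega}\tilde u^2(\gamma-V)g$, both of order $\gamma$, the latter absorbed via the trace inequality against a fraction of $\|\nabla\tilde u\|_2^2$. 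Writing the reaction term as $\int_\Omega\tilde u^2(\lambda-\mu U) - \mu\int_\Omega u\,\tilde u^2$ and using $\lambda-\mu U \le \lambda(1-e^{-\gamma})$ from \eqref{std:thm:U}, one arrives at an inequality of the form
\[
\tfrac{d}{dt}\|\tilde u\|_2^2 \le -\tfrac12\|\nabla\tilde u\|_2^2 + C\gamma\|\tilde u\|_2^2 - 2\mu\int_\Omega u\,\tilde u^2 .
\]

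The decisive step is to convert the logistic term $-\mu\int_\Omega u\,\tilde u^2$ into genuine coercivity, and here lies the main obstacle. Integrating the stationary equation against $1$ and using the no-flux condition shows $\int_\Omega U(\lambda-\mu U)=0$, so $\lambda-\mu U$ has mean zero and the reaction is not pointwise dissipative; decay must be extracted from the weight $u$. I would therefore establish a uniform-in-time positive lower bound $u(x,t)\ge\delta>0$ for large $t$, by rewriting the first equation as $u_t - \Delta u + \nabla v\cdot\nabla u = u\big(\lambda - (\mu+v)u\big)$ and comparing with the logistic dynamics, whose carrying capacity is bounded below by $\lambda/(\mu+\gamma)$; this persistence estimate is the technical heart of the argument and is independent of the convergence being proved. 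Granting $u\ge\delta$, the term $-2\mu\int_\Omega u\,\tilde u^2 \le -2\mu\delta\|\tilde u\|_2^2$ dominates the $O(\gamma)$ destabilising contributions once $\gamma$ is chosen small (depending on $\lambda,\mu,\|u\|_\infty$), giving $\tfrac{d}{dt}\|\tilde u\|_2^2 \le -c\|\tilde u\|_2^2$ and hence exponential decay of $\|\tilde u\|_2$ by Gronwall.

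Finally, I would upgrade to the stated norms: interpolating $\|\tilde u\|_{L^s} \le \|\tilde u\|_{L^2}^{2/s}\|\tilde u\|_{L^\infty}^{1-2/s}$ and invoking the uniform $L^\infty$ bound from Theorem~\ref{Evol1:Thm} transfers the exponential $L^2$ decay to every $L^s$ with $s\in[2,\infty)$, and the slaving estimate $\|\tilde v\|_{W^{2,s}} \le C\gamma\|\tilde u\|_{L^s}$ then yields the exponential $W^{2,s}$ decay of $v(t)-V$. I expect the uniform positive lower bound for $u$ (equivalently, a weighted Poincaré inequality with weight $u$) to be the crux, since without it the logistic dissipation cannot be shown to beat the Robin-induced $O(\gamma)$ terms; the smallness of $\gamma$ is exactly what renders the chemotaxis, drift and boundary couplings subordinate to that dissipation.
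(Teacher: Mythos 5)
Your proposal is correct and takes essentially the same route as the paper: an $L^2$ energy estimate for $\tilde u=u-U$ with Robin boundary terms absorbed via the trace inequality, the elliptic slaving of $\tilde v=v-V$ to $\tilde u$, the uniform-in-time lower bound $u\geq\min\{\inf_{\overline\Omega}u_0,\lambda/(\mu+\gamma)\}$ obtained by comparison with the logistic ODE $y'=\lambda y-(\mu+\gamma)y^2$ (which is exactly the paper's Lemma~\ref{comp:lemma}), smallness of $\gamma$, Gronwall, H\"older interpolation against the uniform $L^\infty$ bound, and elliptic regularity for the $\tilde v$-equation. The only inessential difference is your splitting of the chemotactic flux as $u\nabla\tilde v+\tilde u\nabla V$ instead of the paper's $\tilde u\nabla v+U\nabla\tilde v$, which merely exchanges the roles of $\normA[L^\infty]{u}$ and $\normA[L^\infty]{U}$ in the cross-term constants.
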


\section{Stationary problem}

The proof of Theorem \ref{systSPE:ex:thm} is based on the ideas used in \cite{KW}. The change of variable $W(x):=U(x)e^{-V(x)}$ transforms the system \eqref{systSPE} into the following one
\begin{equation}\label{systSPE:M}
\left\{ \begin{array}{l}
 0=\nabla\cdot(e^{V}\nabla W)+\lambda We^V-\mu \left(We^V\right)^2, \\
 \partial_\nu W|_{\partial\Omega}=0,\\
 \Delta V=VWe^V,\\
 \partial_\nu V|_{\partial\Omega}=(\gamma-V)g. 
 \end{array} \right.
\end{equation}
We split the above system into two separate problems, namely we consider the function $V[W]\in C^{1,\alpha}(\overline{\Omega})$ to be a weak solution of 
\begin{equation}
\left\{ \begin{array}{l}\label{st:V}
 \Delta V=VWe^V,\\
 \partial_\nu V|_{\partial\Omega}=(\gamma-V)g 
 \end{array} \right.
\end{equation}   
for a given function $W\in C^0(\overline{\Omega})_{+}=\{f\in C^0(\overline{\Omega}):f(x)\geq 0\,\text{ for all } x\in\overline{\Omega}\}$, and the function $W[V]\in C^{2,\alpha}(\overline{\Omega})$ to be a solution of 
\begin{equation}\label{stW:aux}
\left\{ \begin{array}{l}
 0=\nabla\cdot(e^{V}\nabla W)+\lambda We^V-\mu \left(We^V\right)^2, \\
 \partial_\nu W|_{\partial\Omega}=0, 
 \end{array} \right.
\end{equation}
for a given function $V\in C^{1,\alpha}(\overline{\Omega})$. Recall the following result for the latter problem, which comes from \cite{KW}.
\begin{lemma}\label{W:ex:est}
Let $V\in C^{1,\alpha}(\overline{\Omega})$ be a given function. There exists a unique positive solution $W[V]\in C^{2,\alpha}(\overline{\Omega})$ of the problem (\ref{stW:aux}). This solution satisfies 
\begin{equation}\label{W:est}
\min_{x\in\overline{\Omega}}\left(e^{-V(x)}\right)\frac{\lambda}{\mu}\leq W[V](x)\leq \max_{x\in\overline{\Omega}}\left(e^{-V(x)}\right)\frac{\lambda}{\mu}
\end{equation}
for all $x\in\overline{\Omega}$. The mapping $V\mapsto W[V]$ is a continuous function from $C^{1,\alpha}(\overline{\Omega})$ to $C^{0}(\overline{\Omega})$.
\end{lemma}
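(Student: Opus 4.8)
The plan is to establish existence together with the two-sided bound \eqref{W:est} by the method of sub- and supersolutions, upgrade regularity via Schauder theory, prove uniqueness through a Picone-type (D\'{\i}az--Sa\'a) identity, and deduce continuity of $V\mapsto W[V]$ by a compactness-plus-uniqueness argument. Write $a:=e^{V}$; since $V\in C^{1,\alpha}(\overline\Omega)$ is continuous on the compact set $\overline\Omega$, we have $0<a_-\le a\le a_+$ with $a\in C^{1,\alpha}(\overline\Omega)$, and the bulk equation reads $\nabla\cdot(a\nabla W)+aW(\lambda-\mu a W)=0$. First I would check that the constants $\underline W:=\tfrac{\lambda}{\mu}\min_{\overline\Omega}e^{-V}$ and $\overline W:=\tfrac{\lambda}{\mu}\max_{\overline\Omega}e^{-V}$ form an ordered pair of sub- and supersolutions: for a constant $c$ the reaction term is $ac(\lambda-\mu a c)$, which is $\ge 0$ whenever $c\le\tfrac{\lambda}{\mu}e^{-V(x)}$ for all $x$ and $\le 0$ whenever $c\ge\tfrac{\lambda}{\mu}e^{-V(x)}$ for all $x$, while both constants satisfy $\partial_\nu W=0$ trivially and $\underline W\le\overline W$. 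The standard monotone iteration (after a large linear shift making the reaction increasing on $[\underline W,\overline W]$) then yields a weak solution $W$ with $\underline W\le W\le\overline W$, which is precisely \eqref{W:est} and, since $\underline W>0$, forces $W>0$. As $a\in C^{1,\alpha}$ and the right-hand side is then H\"older continuous in $x$, elliptic Schauder estimates bootstrap $W$ to $C^{2,\alpha}(\overline\Omega)$.

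For uniqueness I would exploit that $s\mapsto\lambda-\mu a s$ is strictly decreasing. If $W_1,W_2$ are two positive solutions, the bounds above confine both between positive constants, so $\tfrac{W_1^2-W_2^2}{W_1}$ and $\tfrac{W_2^2-W_1^2}{W_2}$ belong to $H^1(\Omega)$. Testing the weak form of the equation for $W_i$ against the corresponding function, adding, and discarding boundary terms by the Neumann condition, the left-hand side becomes the Picone expression
\[
\int_\Omega a\Bigl[\nabla W_1\cdot\nabla\Bigl(\tfrac{W_1^2-W_2^2}{W_1}\Bigr)+\nabla W_2\cdot\nabla\Bigl(\tfrac{W_2^2-W_1^2}{W_2}\Bigr)\Bigr]\dx\ge 0,
\]
which is nonnegative since it equals $\int_\Omega a\bigl(|\nabla W_1-\tfrac{W_1}{W_2}\nabla W_2|^2+|\nabla W_2-\tfrac{W_2}{W_1}\nabla W_1|^2\bigr)\dx$, whereas the right-hand side collapses to $-\mu\int_\Omega a^2(W_1+W_2)(W_1-W_2)^2\dx\le 0$. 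Since the two sides are equal, both vanish, and as $W_1+W_2>0$ this gives $W_1\equiv W_2$.

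Finally, for continuity let $V_n\to V$ in $C^{1,\alpha}(\overline\Omega)$. The barriers in \eqref{W:est} for $W[V_n]$ are controlled uniformly through $\min e^{-V_n}$ and $\max e^{-V_n}$, and the $C^{1,\alpha}$-norms of the coefficients $e^{V_n}$ stay bounded, so Schauder estimates bound $\{W[V_n]\}$ in $C^{2,\alpha}(\overline\Omega)$; hence along a subsequence $W[V_n]\to W^*$ in $C^0(\overline\Omega)$. Passing to the limit in the weak formulation shows $W^*$ solves \eqref{stW:aux} for the limit $V$, so $W^*=W[V]$ by uniqueness, and since every subsequence admits a further subsequence with this same limit, the whole sequence converges, proving continuity into $C^0(\overline\Omega)$. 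I expect the uniqueness step to be the main obstacle: one must justify the admissibility of the singular quotient test functions and fix the correct sign in the Picone identity, after which the strict monotonicity of the reaction term closes the argument.
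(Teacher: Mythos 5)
The paper itself does not prove Lemma \ref{W:ex:est}: it is imported verbatim from \cite{KW} (``Recall the following result \dots which comes from \cite{KW}''), so there is no in-paper argument to compare yours against; I can only judge your proof on its own merits, and it is correct and self-contained. The constants $\underline W=\tfrac{\lambda}{\mu}\min_{\overline\Omega}e^{-V}$ and $\overline W=\tfrac{\lambda}{\mu}\max_{\overline\Omega}e^{-V}$ are indeed an ordered sub-/supersolution pair for the Neumann problem \eqref{stW:aux}, since the divergence term vanishes for constants and the sign of $ac(\lambda-\mu ac)$ is as you state; monotone iteration then produces a solution obeying \eqref{W:est}, and the Schauder bootstrap to $C^{2,\alpha}(\overline\Omega)$ is routine because $a=e^{V}\in C^{1,\alpha}(\overline\Omega)$. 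Your Picone (D\'{\i}az--Sa\'a / Brezis--Oswald) uniqueness step checks out algebraically: testing the equations for $W_1,W_2$ by $(W_1^2-W_2^2)/W_1$ and $(W_2^2-W_1^2)/W_2$ and adding, the gradient terms sum exactly to $\int_\Omega a\bigl(\bigl|\nabla W_1-\tfrac{W_1}{W_2}\nabla W_2\bigr|^2+\bigl|\nabla W_2-\tfrac{W_2}{W_1}\nabla W_1\bigr|^2\bigr)\geq 0$, while the reaction terms collapse to $-\mu\int_\Omega a^2(W_1+W_2)(W_1-W_2)^2\leq 0$, forcing $W_1\equiv W_2$; the admissibility of the quotient test functions is unproblematic since any positive $C^{2,\alpha}(\overline\Omega)$ solution is bounded away from zero on the compact set $\overline\Omega$ (you only need positivity and continuity here, not \eqref{W:est} for arbitrary solutions). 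The continuity argument --- uniform $L^\infty$ bounds from \eqref{W:est}, uniform Schauder bounds from the uniformly bounded coefficients $e^{V_n}$, Arzel\`a--Ascoli, passage to the limit, identification by uniqueness, and the subsequence principle --- is likewise sound, with the harmless remark that sequential continuity suffices because all spaces involved are metric. In short: where the paper merely cites \cite{KW}, you have supplied a complete proof along the classical lines for heterogeneous logistic equations, which is very likely the same mechanism used in the cited reference.
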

In the proof of Theorem \ref{systSPE:ex:thm} we will also need the following result. 
\begin{lemma}\label{V:ex:lem}
For each $W\in C^0(\overline{\Omega})_+$ there exists a unique solution $V[W]\in C^{1,\alpha}(\overline{\Omega})$, this solution depends continuously on $W$, that is, the mapping $W\mapsto V[W]$ is a continuous function from $C^0(\Omega)_+$ to $C^{1,\alpha}(\overline{\Omega})$. This solution satisfies 
\begin{equation}\label{V:weak:est}
0\leq V[W](x)\leq \gamma , 
\end{equation}
and if $W\in C^\alpha(\overline{\Omega})$ and $W\not \equiv 0$, then we have
\begin{equation}\label{V:class:est}
0<V[W](x)<\gamma\quad\text{ for all }x\in\bar{\Omega}.
\end{equation}
\end{lemma}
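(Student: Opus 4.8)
The plan is to split Lemma~\ref{V:ex:lem} into three parts—existence together with the weak bound \eqref{V:weak:est}, uniqueness, and continuous dependence—and only at the end to upgrade \eqref{V:weak:est} to the strict bound \eqref{V:class:est}. For existence I would use the method of sub- and supersolutions adapted to the Robin condition. Since $W\geq0$, $\gamma>0$ and $g>0$, the constants $\underline V\equiv0$ and $\overline V\equiv\gamma$ form an ordered pair of sub- and supersolutions of \eqref{st:V}: for $\overline V$ one has $\Delta\overline V=0\leq\gamma We^{\gamma}=\overline V We^{\overline V}$ and $\partial_\nu\overline V=0=(\gamma-\overline V)g$, while for $\underline V$ one has $\Delta\underline V=0=\underline V We^{\underline V}$ and $\partial_\nu\underline V=0\leq\gamma g=(\gamma-\underline V)g$. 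A monotone iteration—solving at each step the linear Robin problem $-\Delta V^{(m+1)}+KV^{(m+1)}=KV^{(m)}-V^{(m)}We^{V^{(m)}}$, $\partial_\nu V^{(m+1)}+gV^{(m+1)}=\gamma g$, with $K\geq\|W\|_{\infty}(1+\gamma)e^{\gamma}$ chosen so that the right-hand side is nondecreasing in $V^{(m)}$ on $[0,\gamma]$—produces a solution $V[W]$ satisfying $0\leq V[W]\leq\gamma$, which is \eqref{V:weak:est}. As $V[W]$ is bounded and $W\in C^0(\overline\Omega)$, the right-hand side $V[W]We^{V[W]}$ lies in $L^\infty(\Omega)$, so $L^p$ elliptic regularity for the Robin problem gives $V[W]\in W^{2,p}(\Omega)$ for every $p<\infty$, and Sobolev embedding yields $V[W]\in C^{1,\alpha}(\overline\Omega)$.

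Uniqueness follows from a short energy computation. If $V_1,V_2\in[0,\gamma]$ both solve \eqref{st:V}, set $\phi=V_1-V_2$. Since $s\mapsto se^{s}$ is increasing on $[0,\infty)$, the mean value theorem gives $V_1e^{V_1}-V_2e^{V_2}=c_0\phi$ with $c_0=(1+\xi)e^{\xi}\geq0$ for some $\xi$ between $V_1$ and $V_2$, so $\phi$ satisfies $\Delta\phi=Wc_0\,\phi$ in $\Omega$ and $\partial_\nu\phi=-g\phi$ on $\partial\Omega$. Multiplying by $\phi$ and integrating by parts yields
\[
\int_\Omega|\nabla\phi|^2+\int_{\partial\Omega}g\phi^2+\int_\Omega Wc_0\,\phi^2=0,
\]
and since $g>0$, $W\geq0$, $c_0\geq0$, every term vanishes; the boundary term forces $\phi=0$ on $\partial\Omega$ while $\int_\Omega|\nabla\phi|^2=0$ forces $\phi$ constant, hence $\phi\equiv0$.

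Continuity of $W\mapsto V[W]$ I would obtain by a compactness-and-uniqueness argument. Given $W_k\to W$ in $C^0(\overline\Omega)$ with $W_k\geq0$, the solutions $V_k:=V[W_k]$ all lie in $[0,\gamma]$, so the right-hand sides $V_kW_ke^{V_k}$ are uniformly bounded in $L^\infty$; hence $\{V_k\}$ is bounded in $W^{2,p}(\Omega)$ for every $p<\infty$ by $L^p$ estimates. Fixing $p$ with $1-n/p>\alpha$, the compact embedding $W^{2,p}(\Omega)\hookrightarrow\hookrightarrow C^{1,\alpha}(\overline\Omega)$ shows that every subsequence of $\{V_k\}$ has a further subsequence converging in $C^{1,\alpha}(\overline\Omega)$ to some $\tilde V$; passing to the limit in the weak formulation (using uniform convergence of $V_k$, $\nabla V_k$ and $W_k$) identifies $\tilde V$ as a weak solution of \eqref{st:V} for the datum $W$, whence $\tilde V=V[W]$ by the uniqueness just proved. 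As the limit is independent of the subsequence, the whole sequence converges to $V[W]$ in $C^{1,\alpha}(\overline\Omega)$, which is the asserted continuity.

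Finally, for the strict bound \eqref{V:class:est} I would assume $W\in C^\alpha(\overline\Omega)$, $W\not\equiv0$, so that $V[W]We^{V[W]}\in C^\alpha(\overline\Omega)$ and Schauder theory upgrades $V[W]$ to a classical $C^{2,\alpha}(\overline\Omega)$ solution. Writing $\mathcal L V=\Delta V-c\,V$ with $c:=We^{V}\geq0$, the equation reads $\mathcal LV=0$, and for $\psi:=\gamma-V$ one computes $\mathcal L\psi=-c\gamma\leq0$. If $V$ vanished at an interior point it would attain there a nonpositive interior minimum, so the strong maximum principle would force $V\equiv0$, contradicting $\partial_\nu V=\gamma g>0$; a boundary zero is excluded by the Hopf lemma, which would give $\partial_\nu V<0$ against $\partial_\nu V=\gamma g>0$. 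The same principles applied to the supersolution $\psi$, together with the hypothesis $W\not\equiv0$ (which rules out $\psi\equiv0$, i.e. $V\equiv\gamma$), give $\psi>0$, that is $V<\gamma$. I expect the continuous-dependence step to be the main obstacle: one must pass from the crude uniform bounds to convergence in the sharp Hölder norm $C^{1,\alpha}$, and the cleanest route is the $L^p$-plus-compactness argument above, precisely because the mere $C^0$ regularity of $W$ precludes a direct Schauder estimate.
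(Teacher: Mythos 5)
Your overall architecture is sound, and your existence argument is a genuinely different route from the paper's: you build the solution by monotone iteration between the ordered sub- and supersolutions $\underline V\equiv 0$ and $\overline V\equiv\gamma$, whereas the paper runs a Leray--Schauder fixed-point argument for the operator $f\mapsto V$ solving the linear Robin problem $\Delta V=fe^fW$, $\partial_\nu V+gV=\gamma g$, with the required a priori bound supplied by a separate estimate. Both are legitimate; the monotone scheme buys you the bound $0\le V\le\gamma$ for the \emph{constructed} solution for free, while the paper's scheme forces one to prove that bound for \emph{every} solution of the $\sigma$-family first---a difference that matters below. Your continuity argument (uniform $W^{2,p}$ bounds, compact embedding into $C^{1,\alpha}(\overline\Omega)$, subsequence extraction, identification of the limit by uniqueness) and your maximum-principle/Hopf treatment of the strict bounds \eqref{V:class:est} coincide with the paper's in all essentials.

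There is, however, a genuine gap in the uniqueness step, and it propagates to the statement of the lemma. You prove uniqueness only among solutions with values in $[0,\gamma]$: your mean value theorem coefficient $c_0=(1+\xi)e^{\xi}$ is nonnegative precisely because $\xi$ lies between $V_1,V_2\in[0,\gamma]$, whereas $s\mapsto se^s$ is decreasing on $(-\infty,-1)$, so for a hypothetical solution dipping below $-1$ the integrand $W\bigl(V_1e^{V_1}-V_2e^{V_2}\bigr)(V_1-V_2)$ need not be nonnegative and the energy identity no longer forces $\phi\equiv 0$. Since your iteration only produces a solution inside the order interval and says nothing about solutions outside it, what you have shown is ``there is exactly one solution with $0\le V\le\gamma$,'' which is strictly weaker than the lemma's claim of uniqueness in $C^{1,\alpha}(\overline\Omega)$. (Your continuity and strict-positivity steps survive, because they only ever compare functions already known to lie in $[0,\gamma]$; it is the headline uniqueness assertion, and hence the well-definedness of $V[W]$, that is affected.) The paper closes exactly this hole as its first step: for an arbitrary weak solution $V\in C^{1,\beta}(\overline\Omega)$ it tests the equation satisfied by $z_1=V-\gamma$ with $(z_1)_+^2$ and the equation satisfied by $z_2=-V$ with $(z_2)_+^2$; in each case the boundary and bulk terms have a sign, leaving $\int_\Omega|\nabla z_i|^2(z_i)_+\le 0$ and hence $(z_i)_+\equiv 0$, i.e. $0\le V\le\gamma$ for \emph{every} solution. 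You should insert such an a priori estimate (valid at the weak-solution level, since for merely continuous $W$ you cannot invoke the classical maximum principle) before your energy computation; with it, your uniqueness argument applies to the full class and the proof is complete.
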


\begin{proof}
\begin{enumerate}
\item {\it A priori estimates.}\\
Let $W\in C^0(\overline{\Omega})_+$ be a given function and consider the general problem
\begin{equation}\label{V:Fp:nlin}
\left\{\begin{array}{lr}
\Delta V=\sigma V e^{V} W,&x\in\Omega\\
 \partial_\nu V=(\sigma\gamma-V) g,&x\in\partial\Omega
\end{array}\right.
\end{equation}
for $\sigma\in[0,1]$. Of course, when $\sigma=1$ we obtain problem \eqref{st:V}.

 Due to elliptic regularity we expect that the solution $V[W]$ of \eqref{V:Fp:nlin} (\eqref{st:V}) can be at most in $C^{1,\beta}(\bar{\Omega})$ for any $\beta\in(0,1)$. Suppose  that $V\in C^{1,\beta}(\overline{\Omega})$ is a weak solution of \eqref{V:Fp:nlin}, then $z_1:=V-\sigma\gamma$ weakly solves the problem
$$
\Delta z_1=\sigma(z_1+\sigma\gamma)e^{z_1+\sigma\gamma}W,\quad \partial_\nu|_{\partial\Omega}z_1=-z_1g.
$$
Testing this equation by $(z_1)_+^2$ one obtains
$$
0\geq -\left(\int_{\partial\Omega}z_1g(z_1)_+^2+\int_\Omega \sigma(z_1+\sigma\gamma)e^{z_1+\sigma\gamma}W(z_1)_+^2\right) =2\int_\Omega|\nabla z_1|^2(z_1)_+ .
$$
This inequality implies that $(z_1)_+\equiv 0$ a.e. in $\Omega$, and thus $V(x)\leq \sigma\gamma$ for all $x\in\Omega$. Now, the function $z_2=-V$  solves the problem
$$
-\Delta z_2=-\sigma z_2e^{-z_2}W,\quad \partial_\nu|_{\partial\Omega}z_2=-(\sigma\gamma+z_2)g.
$$
If we test this equation by $(z_2)_+^2$, then we get
$$
2\int_\Omega|\nabla z_2|^2(z_2)_+ =-\left(\int_{\partial\Omega}(\sigma\gamma+z_2)g(z_2)_+^2 +\int_\Omega \sigma z_2e^{-z_2}W(z_2)_+^2 \right)\leq 0,
$$
Similarly to preceding case we have that $(z_2)_+\equiv 0$ a.e in $\Omega$, and so $V(x)\geq 0$. We obtain the following estimate for solution of \eqref{V:Fp:nlin}, 
\begin{equation}\label{V:sig:est}
0\leq V\leq\sigma\gamma
\end{equation}
for $\sigma\in[0,1]$, and particularly \eqref{V:weak:est}.

Now, suppose that $W\in C^{\alpha}(\overline{\Omega})$, then by the Schauder estimates, \cite[Theorem 6.31]{GiT}, $V\in C^{2,\alpha}(\overline{\Omega})$  and we can use the maximum principle. If $W\not \equiv 0$ and $\sigma\neq 0$, then the function $V$ is non constant solution of the equation 
$$
\Delta V+cV=0\quad \text{ in }\Omega
$$
with $c=-\sigma e^VW\leq 0$, thus by the strong maximum principle \cite[Theorem 3.5]{GiT} can not achieve a nonpositive minimum in $\Omega$, that is $V>0$ in $\Omega$. Using this we should have $\partial_\nu V(x_0)\leq 0$ at any point $x_0\in\partial\Omega$ where  we have $V(x_0)=0$, this is a contradiction with
$\partial_\nu V(x_0)=\sigma\gamma g(x_0)>0.$ Thus we obtain $V>0$ in $\bar{\Omega}.$ Also by the strong maximum principle we obtain that the nonnegative maximum of $V$ can not be achieved in $\Omega$. Let $x_1\in\partial\Omega$ be a point where $V$ achieves a positive maximum and suppose that $V(x_1)>\sigma\gamma$. Then, by the boundary condition, we have
$$
\partial_\nu V(x_1)=(\sigma\gamma-V(x_1))g(x_1)<0,
$$
this is a contradiction with the Hopf lemma \cite[Lemma 3.4]{GiT}. Thus, by taking $\sigma=1$, we proved \eqref{V:class:est}.  If $W\equiv 0$ and $\sigma\neq 0$, then the only solution of \eqref{V:Fp:nlin} is $V=\sigma\gamma$. When $\sigma=0$ the only solution of \eqref{V:Fp:nlin} is zero.

\item {\it Existence.}\\
We apply the Leray-Schauder theorem in the proof of existence of $V[W]$. For a given $f\in C^0(\overline{\Omega})$ we consider the operator $\mathcal{F}[f]=V$, where $V$ is a solution of the following linear problem

\begin{equation}\label{V:lin:pr}
\left\{\begin{array}{lr}
\Delta V=fe^f W,&x\in\Omega\\
 \partial_\nu V+gV=\gamma g,&x\in\partial\Omega
\end{array}\right.
\end{equation}

Thanks to \cite[Theorem 2.4.2.6]{Grisvard} there exists a unique solution $V\in W^{2,p}(\Omega),$ for any $p\in(1,\infty),$ of \eqref{V:lin:pr} which satisfies the estimate
\begin{equation}\label{Lp:est}
\begin{aligned}
\normA[W^{2,p}(\Omega)]{V}&\leq C_1\left(\normA[L^p(\Omega)]{fe^f W}+\normA[W^{1-1/p, p}(\partial\Omega)]{\gamma g}\right)\\
&\leq C_2\left(\normA[L^\infty(\Omega)]{fe^f W}+\gamma\normA[C^{1,\alpha}(\partial\Omega)]{ g}\right). 
\end{aligned}
\end{equation} 
 This estimates and Sobolev embedding gives us that $\mathcal{F}[f]\in C^{1,\beta}(\bar{\Omega})$ with any $\beta\in(0,1)$ thus $\mathcal{F}$ is a compact operator in $C^0(\bar{\Omega})$, by the Arzel{\'a}-Ascoli theorem.
Now, take a sequence of continuous functions $\{f_n\}_n$ which converges uniformly to $f$. Using \eqref{Lp:est} we obtain that
$$
\normA[W^{2,p}(\Omega)]{\mathcal{F}[f_n]-\mathcal{F}[f]}\leq C_2\normA[L^\infty(\Omega)]{\left(f_ne^{f_n}- fe^{f}\right)W}
$$ 
and continuity of $\mathcal{F}$ follows. Consider the family of fixed point equations
\begin{equation}\label{V:fp:e}
V=\sigma \mathcal{F}[V],\quad\text{ for }\sigma\in[0,1].
\end{equation}

By direct calculations we see that solving \eqref{V:fp:e} for a given $\sigma$ is equivalent with solving \eqref{V:Fp:nlin}. And by \eqref{V:sig:est} we obtain that for any $\sigma\in[0,1]$ solution of \eqref{V:fp:e} satisfy
$$
\normA[L^\infty(\Omega)]{V}\leq \gamma
$$ 
and the Leray-Schauder theorem gives us existence of solution $V[W]$ of \eqref{st:V}.

\item {\it Uniqueness and continuous dependence.}\\ 
\end{enumerate}
Let $V_1,V_2$ be two distinct solutions of (\ref{st:V}) Their difference $\tilde{V}=V_1-V_2$ satisfy the equation
$$
\Delta \tilde{V}=W(V_1e^{V_1}-V_2e^{V_2}),\quad \partial_\nu \tilde{V}|_{\partial\Omega}=-\tilde{V}g.
$$
We multiply this equation by $\tilde{V}$ and integrate over $\Omega$, then 
\begin{equation}\label{uniq:equality}
-\left(\int_{\partial\Omega}\tilde{V}^2g+\int_\Omega|\nabla \tilde{V}|^2\right)=\int_\Omega W (V_1e^{V_1}-V_2e^{V_2})\tilde{V}\,\geq 0
\end{equation}
where the last inequality follows the fact that the function $xe^x$ is increasing, and $W\geq 0$. It follows from \eqref{uniq:equality} and $\tilde{V}\in C^{1,\beta}(\overline{\Omega})$ that $\tilde{V}=0,$ a contradiction. Now we show that $W\mapsto V[W]$ is a continuous mapping from $C^0(\overline{\Omega})_+$ into itself. Let $\{W_n\}_n\subset C^0(\overline{\Omega})_+$ be a sequence converging uniformly to $W$ and assume that $V[W_n]\not \rightarrow V[W]$, that is there exists some $\varepsilon>0$ and a sub sequence $\{W_{n_k}\}_{k}$ for which we have $\norm{V[W_{n_k}]-V[W]}_{C^{1,\alpha}(\overline{\Omega})}\geq \varepsilon$ for all $k\in\mathbb{N}.$ By \eqref{V:weak:est} and \eqref{Lp:est} we know that $$\norm{V[W_{n_k}]}_{C^{1,\beta}(\overline{\Omega})}\leq C_3\gamma\left( e^\gamma\sup_k  \normA[L^\infty(\Omega)]{W_{n_k}}+\normA[C^{1,\alpha}]{g}\right)$$ for $\beta>\alpha$, and so by Arzel{\'a}-Ascoli theorem there exists another sub sequence $\{V[W_{n_{k_i}}]\}_i$ which converges in $C^{1,\alpha}(\overline{\Omega})$ to some $V_\infty$, which is a weak solution of \eqref{st:V}, and by  uniqueness $V_\infty=V[W],$ a contradiction.  
  
\end{proof}

Now we are ready to prove Theorem \ref{systSPE:ex:thm}.
\begin{proof}[Proof of Theorem \ref{systSPE:ex:thm}]
Consider the space $X=\{f\in C^0(\bar{\Omega}):e^{-\gamma}\frac{\lambda}{\mu}\leq f\leq\frac{\lambda}{\mu}\}$ and define the operator
$$
\mathcal{T}[f]:=W[V[f]]
$$
for $f\in X$. It is clear that $X$ is closed and convex. The operator $\mathcal{T}$ is continuous, as a composition of two continuous operators, and compact, since $\mathcal{T}[f]\in C^{2,\alpha}(\bar{\Omega})$. It also follows from Lemma \ref{W:ex:est} that $\mathcal{T}[X]\subset X$. By the Schauder fixed point theorem, \cite{Evans}, there exists a fixed point $W\in X$ that, in fact, is in $C^{2,\alpha}(\bar{\Omega})$. Estimates \eqref{std:thm:U}, \eqref{std:thm:V} follow from \eqref{W:est} and \eqref{V:class:est}. The proof of uniqueness of the positive solution for small $\gamma$ is postponed to Lemma \ref{uniq:std}.
\end{proof}

Before we give a proof of the uniqueness, we need the following lemma.
\begin{lemma}
Let $\Omega\subset\mathbb{R}^n$ be a bounded domain with Lipschitz boundary, then for any $f\in H^1(\Omega)$ and any $\varepsilon>0$ we have
 \begin{equation}\label{trace:est}
\norm{f}_{L^2(\partial\Omega)}^2\leq \varepsilon\norm{\nabla f}_{L^2(\Omega)}^2+c_1(\varepsilon)\norm{f}_{L^2(\Omega)}^2
\end{equation}
and
\begin{equation}\label{GN:iter:ineq}
\normA[L^2(\Omega)]{f}^2\leq \varepsilon \normA[L^2(\Omega)]{\nabla f}^2+c_1'(\varepsilon)\normA[L^1(\Omega)]{f}^2 
\end{equation}
\end{lemma}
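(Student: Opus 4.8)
The plan is to prove the two inequalities separately, since they rest on different standard tools, and then to remark that both can be packaged as normalised contradiction arguments built on compactness of the relevant embeddings.

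For the trace inequality \eqref{trace:est} I would use the classical divergence-theorem device. Because $\partial\Omega$ is Lipschitz, there is a fixed vector field $\mathbf{h}\in C^{1}(\overline{\Omega};\mathbb{R}^n)$ with $\mathbf{h}\cdot\nu\geq\kappa>0$ $\mathcal{H}^{n-1}$-a.e. on $\partial\Omega$; such a transversal field exists precisely by the graph/cone structure of a Lipschitz boundary. Applying the divergence theorem to $f^2\mathbf{h}$ gives
$$
\kappa\int_{\partial\Omega}f^2\,\mathrm{d}\sigma\leq\int_{\partial\Omega}f^2(\mathbf{h}\cdot\nu)\,\mathrm{d}\sigma=\int_\Omega\operatorname{div}(f^2\mathbf{h})\,\mathrm{d}x=\int_\Omega\bigl(2f\,\mathbf{h}\cdot\nabla f+f^2\operatorname{div}\mathbf{h}\bigr)\,\mathrm{d}x,
$$
first for smooth $f$ and then for all $f\in H^1(\Omega)$ by density. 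Bounding $|\mathbf{h}|$ and $|\operatorname{div}\mathbf{h}|$ by constants and absorbing the cross term with Young's inequality $2|f||\nabla f|\leq\delta|\nabla f|^2+\delta^{-1}f^2$ yields \eqref{trace:est} after relabelling $\delta$ in terms of $\varepsilon$. The one delicate point is the construction of $\mathbf{h}$ on a merely Lipschitz domain; this can be avoided by a normalisation argument instead: if \eqref{trace:est} failed for some fixed $\varepsilon$, a sequence $f_n$ with $\normA[L^2(\partial\Omega)]{f_n}=1$ would be bounded in $H^1(\Omega)$, hence, by compactness of the trace map $H^1(\Omega)\to L^2(\partial\Omega)$, convergent in $L^2(\partial\Omega)$ to a limit of unit boundary norm, while $\normA[L^2(\Omega)]{f_n}\to0$ would force the weak $H^1$ limit to vanish, a contradiction.

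For the interpolation inequality \eqref{GN:iter:ineq} I would invoke the Gagliardo--Nirenberg inequality on the bounded domain $\Omega$,
$$
\normA[L^2(\Omega)]{f}\leq C\,\normA[L^2(\Omega)]{\nabla f}^{\theta}\normA[L^1(\Omega)]{f}^{1-\theta}+C\normA[L^1(\Omega)]{f},
$$
whose scaling relation $\tfrac12=\theta\bigl(\tfrac12-\tfrac1n\bigr)+(1-\theta)$ forces $\theta=\tfrac{n}{n+2}\in(0,1)$ in every dimension $n\geq1$; this is the point that makes the argument work uniformly in $n$. Squaring and then applying Young's inequality with the conjugate exponents $1/\theta$ and $1/(1-\theta)$ to the mixed term,
$$
\normA[L^2(\Omega)]{\nabla f}^{2\theta}\normA[L^1(\Omega)]{f}^{2(1-\theta)}\leq\eta\,\normA[L^2(\Omega)]{\nabla f}^{2}+C_\eta\normA[L^1(\Omega)]{f}^{2},
$$
and choosing $\eta$ small gives \eqref{GN:iter:ineq}. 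Equivalently, the same normalised-contradiction scheme applies: the failure of \eqref{GN:iter:ineq} produces $f_n$ with $\normA[L^2(\Omega)]{f_n}=1$, bounded in $H^1(\Omega)$, hence (Rellich) strongly convergent in $L^2(\Omega)$ and, since $\Omega$ is bounded, a fortiori in $L^1(\Omega)$; the constraint $\normA[L^1(\Omega)]{f_n}^2<1/n$ then forces the limit to be $0$, contradicting its unit $L^2$ norm.

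The main obstacle lies in the first inequality: it is obtaining a usable trace estimate on a general Lipschitz domain, that is, either constructing the transversal field $\mathbf{h}$ or establishing compactness of the trace map $H^1(\Omega)\to L^2(\partial\Omega)$ in that generality. The second inequality is a routine Gagliardo--Nirenberg-plus-Young computation, the only thing to check being that the interpolation exponent lies strictly between $0$ and $1$, which it does for all $n\geq1$, so that the gradient term can be absorbed.
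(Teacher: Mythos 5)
Your proposal is correct. For the interpolation inequality \eqref{GN:iter:ineq} it is in fact the same argument as the paper's: Gagliardo--Nirenberg on $\Omega$ with $\theta=\tfrac{n}{n+2}$ followed by Young's inequality, giving $c_1'(\varepsilon)=C(1+\varepsilon^{-n/2})$. For the trace estimate \eqref{trace:est} your route differs in presentation: the paper simply cites the multiplicative trace inequality $\norm{f}_{L^2(\partial\Omega)}\leq C_T\norm{f}_{L^2(\Omega)}^{1/2}\norm{f}_{H^1(\Omega)}^{1/2}$ (Boyer--Fabrie, Theorem III.2.19) and applies Young, obtaining $c_1(\varepsilon)=\tfrac{C_T^4}{4\varepsilon}+\varepsilon$, whereas you re-derive the estimate from scratch by integrating $\operatorname{div}(f^2\mathbf{h})$ for a transversal field $\mathbf{h}$; since the vector-field trick is essentially how the cited multiplicative inequality is proved on Lipschitz domains, your argument is a self-contained unpacking of the paper's citation, and it produces a constant of the same order $O(\varepsilon^{-1})$. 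One substantive caveat: your compactness/contradiction fallbacks, while valid (the trace map $H^1(\Omega)\to L^2(\partial\Omega)$ is compact on Lipschitz domains, and Rellich applies for the second inequality), prove the lemma only as literally stated and give no rate for $c_1(\varepsilon)$, $c_1'(\varepsilon)$ as $\varepsilon\searrow 0$. That quantitative information is what the paper actually uses later: in the Moser--Alikakos iteration of Lemma \ref{Glo:ex} one takes $\varepsilon\sim p^{-1}$ and $\varepsilon\sim p^{-2}$ and needs $c_1(\varepsilon)\lesssim \varepsilon^{-1}$ and $c_1'(\varepsilon)\lesssim 1+\varepsilon^{-n/2}$ so that the resulting constants grow only polynomially in $p$ and the iteration closes with a $p$-independent bound. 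So the constructive versions of your argument (divergence theorem plus Young, and Gagliardo--Nirenberg plus Young) are the ones to keep; the compactness variants would not suffice for the lemma's role in the paper.
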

\begin{proof}
From the trace inequality  
$$
\norm{f}_{L^2(\partial\Omega)}\leq C_T\norm{f}_{L^2(\Omega)}^\frac{1}{2}\norm{f}_{H^1(\Omega)}^\frac{1}{2}
$$ 
(see \cite[Theorem III.2.19]{BoyerFabrie} for the proof) we obtain \eqref{trace:est} with $c_1(\varepsilon)=\frac{C_T^4}{4\varepsilon}+\varepsilon.$ Now we prove the second inequality. From the Gagliardo-Nirenberg \cite[Theorem 10.1]{Friedman} inequality we have 
$$
\normA[L^2(\Omega)]{f}^2\leq C_{GN} \normA[L^2(\Omega)]{\nabla f}^{2\theta}\normA[L^1(\Omega)]{f}^{2(1-\theta)}+C_{GN}\normA[L^1(\Omega)]{f}^2
$$
with $\theta=\frac{n}{2+n}$. By the Young inequality we obtain \eqref{GN:iter:ineq}
with $c_1'(\varepsilon)= C_{GN}\max\{1,{C_{GN}}^{\frac{n}{2}}\}(1+\varepsilon^{-\frac{n}{2}})=C_4(1+\varepsilon^{-\frac{n}{2}}).$

\end{proof}

\begin{lemma}\label{uniq:std}
There exists a positive constant $\gamma^\star(\normA[L^\infty]{g},\lambda,\mu)$ such that the positive solution $(U,V)$ is unique for $\gamma <\gamma^\star$.
\end{lemma}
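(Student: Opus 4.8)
The plan is to pass to the variable $W=Ue^{-V}$, so that every positive solution of \eqref{systSPE} corresponds to a pair $(W,V)$ solving \eqref{systSPE:M}, and then to compare two such pairs. By Theorem \ref{systSPE:ex:thm} (in the form \eqref{std:thm:U}, \eqref{std:thm:V}) every positive solution obeys $0<V<\gamma$ and $e^{-\gamma}\tfrac{\lambda}{\mu}\le W\le\tfrac{\lambda}{\mu}$. Given two solutions $(W_1,V_1)$ and $(W_2,V_2)$, set $\tilde V=V_1-V_2$ and $\tilde W=W_1-W_2$; it suffices to prove $\tilde W\equiv 0$, since then $\tilde V\equiv 0$ by the unique solvability in Lemma \ref{V:ex:lem}.

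First I would control $\tilde V$ by $\tilde W$. Subtracting the two $V$-equations and writing $V_1e^{V_1}-V_2e^{V_2}=(1+\xi)e^{\xi}\tilde V$ with $\xi$ between $V_1$ and $V_2$ by the mean value theorem, $\tilde V$ solves $\Delta\tilde V=W_1(1+\xi)e^{\xi}\tilde V+V_2e^{V_2}\tilde W$ with $\partial_\nu\tilde V=-g\tilde V$. Testing by $\tilde V$ and integrating by parts puts the favorably signed boundary term $\int_{\partial\Omega}g\tilde V^2$ and the Dirichlet energy on the good side; using $W_1(1+\xi)e^{\xi}\ge e^{-\gamma}\tfrac{\lambda}{\mu}$ on the left and $V_2e^{V_2}\le\gamma e^{\gamma}$ on the right yields, after Cauchy--Schwarz, $\|\tilde V\|_{L^2(\Omega)}\le \tfrac{\mu}{\lambda}\gamma e^{2\gamma}\|\tilde W\|_{L^2(\Omega)}$. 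Thus $\tilde V$ is smaller than $\tilde W$ by a factor $O(\gamma)$.

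The heart of the matter is the $\tilde W$-equation. Subtracting the two $W$-equations, splitting the diffusion as $\nabla\cdot(e^{V_1}\nabla\tilde W)+\nabla\cdot((e^{V_1}-e^{V_2})\nabla W_2)$ and the reaction terms analogously, and testing by $\tilde W$ (the Neumann condition kills all boundary contributions), I obtain on the left $\int_\Omega e^{V_1}|\nabla\tilde W|^2$ together with the reaction quadratic term of coefficient $e^{V_1}[\mu e^{V_1}(W_1+W_2)-\lambda]$. Here the logistic structure is decisive: the lower bound $W_i\ge e^{-\gamma}\tfrac{\lambda}{\mu}$ gives $\mu e^{V_1}(W_1+W_2)-\lambda\ge\lambda(2e^{-\gamma}-1)=:\kappa$, which is strictly positive once $\gamma<\ln 2$, so this term is coercive and contributes $+\kappa\|\tilde W\|_{L^2(\Omega)}^2$ to the left side. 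All remaining terms carry a factor $e^{V_1}-e^{V_2}$, hence a factor $\tilde V$, and are controlled using the boundedness of $W_i$ and of $\nabla W_2$; the lone gradient cross term $\int_\Omega(e^{V_1}-e^{V_2})\nabla W_2\cdot\nabla\tilde W$ is handled by Young's inequality, its $\tfrac12\|\nabla\tilde W\|_{L^2(\Omega)}^2$ part absorbed on the left. Inserting the first step, $\|\tilde V\|_{L^2(\Omega)}\le C\gamma\|\tilde W\|_{L^2(\Omega)}$, all cross terms are then dominated by $C\gamma\,\|\tilde W\|_{L^2(\Omega)}^2$, so that $\kappa\|\tilde W\|_{L^2(\Omega)}^2\le C\gamma\|\tilde W\|_{L^2(\Omega)}^2$. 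Choosing $\gamma^\star$ so small that $\gamma<\ln2$ and $C\gamma<\kappa$ forces $\tilde W\equiv0$, and hence $\tilde V\equiv0$.

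Finally, the cross-term estimate needs a bound on $\|\nabla W_2\|_{L^\infty(\Omega)}$ that stays bounded as $\gamma\to0$, and this I would secure by elliptic regularity. Applying the estimate \eqref{Lp:est} to \eqref{st:V} (whose right-hand side $VWe^V$ and boundary datum $(\gamma-V)g$ are both $O(\gamma)$) gives $\|V\|_{W^{2,p}(\Omega)}\le C\gamma$, whence $\|\nabla V\|_{L^\infty(\Omega)}\le C\gamma$ by Sobolev embedding for $p>n$; then the equation $\Delta W=-\nabla V\cdot\nabla W-\lambda W+\mu e^{V}W^2$ with Neumann data, having bounded right-hand side and a drift of size $O(\gamma)$, yields $\|W\|_{W^{2,p}(\Omega)}\le C$ with a constant uniform for all sufficiently small $\gamma$, so $\|\nabla W\|_{L^\infty(\Omega)}\le C$. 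The main obstacle is precisely this last point---obtaining $\gamma$-uniform elliptic constants for the $W$-equation (absorbing the drift term by interpolation) and checking that every cross term genuinely carries a power of $\gamma$---while the coercivity $\kappa>0$ that makes the whole scheme close rests on the sharp lower bound for $W$ provided by \eqref{std:thm:U}.
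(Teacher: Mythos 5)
Your proof is correct, but it takes a genuinely different route from the paper's. The paper never leaves the original variables $(U,V)$: it tests the equation for $\tilde U=U_1-U_2$ by $2\tilde U$, and the awkward drift term $\int_\Omega\nabla\tilde U^2\cdot\nabla V_1$ is integrated by parts so that the equation $\Delta V_1=U_1V_1$ and the Robin condition $\partial_\nu V_1=(\gamma-V_1)g\le\gamma g$ eliminate all derivatives of $V_1$; the resulting boundary integral of $\tilde U^2$ is absorbed by the trace inequality \eqref{trace:est}, and the remaining cross term $2\int_\Omega U_2\nabla\tilde U\cdot\nabla\tilde V$ is controlled by the energy estimate \eqref{V:tilda:est}, which keeps the $\int_\Omega|\nabla\tilde V|^2$ term precisely for this purpose. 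Consequently the paper needs nothing beyond the $L^\infty$ bounds of Theorem \ref{systSPE:ex:thm} --- no gradient bounds on the solutions whatsoever. You instead work in the transformed variables $(W,V)=(Ue^{-V},V)$ of \eqref{systSPE:M}: the homogeneous Neumann condition on $W$ kills every boundary term, so you never invoke \eqref{trace:est} and the paper's constraint $\mathcal{F}_1(\gamma)>0$ has no analogue, while the same logistic coercivity reappears as $\mu e^{V_1}(W_1+W_2)-\lambda\ge\lambda(2e^{-\gamma}-1)$. The price, which you correctly identify as the crux, is the cross-diffusion term $\int_\Omega(e^{V_1}-e^{V_2})\nabla W_2\cdot\nabla\tilde W$: it requires a bound on $\normA[L^\infty(\Omega)]{\nabla W_2}$ that is uniform as $\gamma\searrow0$, and your two-step bootstrap ($\normA[W^{2,p}(\Omega)]{V}=O(\gamma)$ from \eqref{Lp:est}, then a $W^{2,p}$-estimate for $W$ with the $O(\gamma)$ drift absorbed by interpolation, $p>n$) is a sound way to get it; it is simply an extra layer of elliptic regularity that the paper's integration-by-parts trick avoids entirely. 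Both arguments close identically: every coupling term carries a factor of $\gamma$, so once $\gamma$ is small against the coercivity constant one gets $\tilde W\equiv0$ (resp.\ $\tilde U\equiv0$), and uniqueness of $V[W]$ from Lemma \ref{V:ex:lem} (resp.\ estimate \eqref{V:tilda:est}) finishes the proof. In short: the paper's proof is leaner and purely $L^2$-energetic; yours is boundary-term-free and structurally consistent with the $(W,V)$ splitting used in the existence proof, at the cost of a $\gamma$-uniform regularity estimate.
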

\begin{proof}
Assume that we have two distinct positive solutions, $(U_i,V_i)$ for $i=1,2$ and let $\tilde{U}:=U_1-U_2$, $\tilde{V}:=V_1-V_2$. The difference $\tilde{V}$ satisfy,
$$
\Delta\tilde{V}-U_2\tilde{V}=\tilde{U}V_1,\quad \partial_\nu\tilde{V}|_{\partial\Omega}=-\tilde{V}g,
$$
and by the Young's inequality, \eqref{V:weak:est} we obtain
\begin{align*}
\int_{\partial\Omega}\tilde{V}^2g+\int_\Omega |\nabla \tilde{V}|^2+\int_\Omega U_2 \tilde{V}^2 &=-\int_\Omega\tilde{U}\tilde{V}V_1\\
&\leq \varepsilon_1 \int_\Omega\tilde{V}^2+ \gamma^2 \frac{1}{4\varepsilon_1}\int_\Omega\tilde{U}^2.
\end{align*}
Recalling that $\inf_{\Omega}U_i>\frac{\lambda}{\mu}e^{-\gamma}$ for $i=1,2$ and choosing $\varepsilon_1=\frac{\lambda}{2\mu}e^{-\gamma}$ we obtain, 
\begin{equation}\label{V:tilda:est}
\int_{\partial\Omega}\tilde{V}^2g+\int_\Omega |\nabla \tilde{V}|^2+\frac{\lambda}{2\mu}e^{-\gamma}\int_\Omega \tilde{V}^2 \leq \gamma^2 \frac{\mu}{2\lambda}e^{\gamma}\int_\Omega \tilde{U}^2
\end{equation}

 The difference $\tilde{U}$ satisfy
$$
0=\nabla\cdot\left(\nabla\tilde{U}-(\tilde{U}\nabla V_1+U_2\nabla\tilde{V})\right)+\lambda\tilde{U}-\mu\tilde{U}(U_1+U_2)
$$
$$
(\partial_\nu \tilde{U}-(\tilde{U}\partial_\nu V_1+U_2\partial_\nu\tilde{V}))|_{\partial\Omega}=0
$$
Multiplying this equation by $2\tilde{U}$, integrating over $\Omega$, since $V_i,U_i>0$ for $i=1,2$ one gets
\begin{align*}
2\int_\Omega|\nabla \tilde{U}|^2 &+2\mu\int_\Omega\tilde{U}^2\left((U_1+U_2)-\frac{\lambda}{\mu}\right)=\int_\Omega\nabla \tilde{U}^2\cdot\nabla V_1+2\int_\Omega U_2\nabla\tilde{U}\cdot\nabla \tilde{V}\\
&=\int_{\partial\Omega}\tilde{U}^2\partial_\nu V_1-\int_\Omega\tilde{U}^2U_1V_1+2\int_\Omega U_2\nabla\tilde{U}\cdot\nabla \tilde{V}\\
&\leq \gamma\normA[L^\infty]{g}\int_{\partial\Omega}\tilde{U}^2+\int_\Omega |\nabla\tilde{U}|^2+\left(\frac{\lambda}{\mu}e^{\gamma}\right)^2\int_\Omega |\nabla \tilde{V}|^2.  
\end{align*}   
Using \eqref{V:tilda:est}, \eqref{trace:est} with $\varepsilon=\frac{1}{2}$
 we obtain,
 
\begin{equation}\label{uniq:ineq}
\mathcal{F}_1(\gamma)\int_\Omega|\nabla \tilde{U}|^2 + 2\mu\mathcal{F}_2(\gamma)\int_\Omega\tilde{U}^2 \leq 0
\end{equation} 
with
$$
\mathcal{F}_1(\gamma)=1-\gamma\frac{\normA[L^\infty]{g}}{2}
$$
$$
\mathcal{F}_2(\gamma)=\frac{\lambda}{\mu}\left(2e^{-\gamma}-1\right)-\gamma^2\frac{e^\gamma}{4\lambda}\left(\frac{\lambda}{\mu}e^{\gamma}\right)^2-\gamma\frac{\normA[L^\infty]{g}c_1\left(\frac{1}{2}\right)}{2\mu}
$$
Since $\lim_{\gamma\searrow 0}\mathcal{F}_2(\gamma)=\frac{\lambda}{\mu}>0$, there exists $0<\gamma^\star(\lambda,\mu, \normA[L^\infty]{g})<\frac{2}{\normA[L^\infty]{g}}$ such that $\mathcal{F}_1(\gamma)>0$ and $\mathcal{F}_2(\gamma)>0$ for $\gamma\in(0,\gamma^\star).$ Thus, when $\gamma <\gamma^\star$ inequality  \eqref{uniq:ineq} implies $\tilde{U}=0$, and  \eqref{V:tilda:est} gives $\tilde{V}=0,$ a contradiction.
\end{proof}

\section{Global existence}
In this section we show the global existence and uniform boundedness of solutions of (\ref{systPE}). To do that, we use the Moser-Alikakos iteration procedure \cite{Alikakos1, Alikakos2}. We start with the following result concerning the local existence of solutions.  

\begin{lemma}[Local existence]\label{Loc:Ex}
Let $u_0>0$, $u_0\in W^{1,p}(\Omega)$ with $p>n$,  then there exists a unique solution $(u,v)$ of \eqref{systPE} satisfying
\begin{align*}
u&\in C([0,T_{\max});W^{1,p}(\Omega))\cap C^{2,1}(\overline{\Omega}\times(0,T_{\max})),\\ 
v&\in C([0,T_{\max});C^{2,\sigma_*}(\overline{\Omega}))\cap C^{0,1}((0,T_{\max});C^{2,\sigma_*}(\overline{\Omega})),
\end{align*} 
for some $\sigma_*\in(0,1)$, where $T_{\max}\in(0,\infty]$ is the maximal time of existence of solution. If $T_{\max}<\infty$, then 
\begin{equation}\label{blow:cond}
\limsup_{t\nearrow T_{\max}}\normA[L^\infty]{u(t)}=+\infty.
\end{equation}
\end{lemma}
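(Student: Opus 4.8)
The plan is to decouple the system through the elliptic solve and then run a contraction argument for the parabolic equation in $C([0,T];W^{1,p}(\Omega))$. For a prescribed nonnegative $\bar u$ I would first solve the \emph{linear} elliptic problem $\Delta v=\bar u\,v$ in $\Omega$, $\partial_\nu v+gv=\gamma g$ on $\partial\Omega$, to obtain $v=v[\bar u]$. This is the linear specialization of the problem treated in Lemma~\ref{V:ex:lem} (with $We^V$ replaced by $\bar u$), so unique solvability, the bound $0\le v\le\gamma$, and the regularity $v\in C^{2,\sigma}(\overline{\Omega})$ — using $W^{1,p}\hookrightarrow C^{0,\sigma}$ for $p>n$ together with Schauder estimates — all follow by the same arguments. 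I would also record a Lipschitz estimate of the form $\|v[\bar u_1]-v[\bar u_2]\|_{W^{2,p}(\Omega)}\le C\,\|\bar u_1-\bar u_2\|_{L^p(\Omega)}$, obtained by testing the equation for the difference and invoking $0\le v\le\gamma$, together with the analogous Lipschitz-in-time bound that will later yield the $C^{0,1}$-in-time regularity of $v$.

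Second, freezing $v=v[\bar u]$, the no-flux condition $\partial_\nu u=u\,\partial_\nu v$ can be turned into a homogeneous Neumann condition by the substitution $w=u\,e^{-v}$ used in Section~3: the flux becomes $e^v\nabla w$, so the drift term is the divergence-form operator $\nabla\cdot(e^v\nabla w)$ with $\partial_\nu w=0$, which generates an analytic semigroup on $L^p(\Omega)$. Equivalently one may work directly with $u$, treating $u_t=\Delta u-\nabla\cdot(u\nabla v)+\lambda u-\mu u^2$ as a semilinear perturbation of the Neumann heat semigroup $(e^{t\Delta})_{t\ge0}$ and writing the mild (variation-of-constants) formulation. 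Since $p>n$, the domains of the fractional powers $(1-\Delta)^{\theta}$ embed into $W^{1,p}(\Omega)$ for suitable $\theta<1$, and the smoothing bounds $\|(1-\Delta)^{\theta}e^{t\Delta}\|_{L^p\to L^p}\le C\,t^{-\theta}$ allow me to control the transport term $\nabla\cdot(u\nabla v)$.

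Third, I would define $\mathcal S[\bar u]=u$ as the solution of this now linear-in-$u$ parabolic problem and show that on a short interval $[0,T]$ and a closed ball in $C([0,T];W^{1,p}(\Omega))$ around the constant extension of $u_0$, the map $\mathcal S$ sends the ball into itself and is a contraction; here the self-map and contraction estimates combine the Lipschitz dependence of $v[\cdot]$ from the first step with the semigroup smoothing, and $T$ is chosen small depending only on $\|u_0\|_{W^{1,p}(\Omega)}$. Banach's fixed point theorem then produces a unique local solution, and parabolic Schauder theory bootstraps its regularity to $u\in C^{2,1}(\overline{\Omega}\times(0,T_{\max}))$ and $v\in C([0,T_{\max});C^{2,\sigma_*}(\overline{\Omega}))$. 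Positivity $u>0$ follows from the parabolic maximum principle applied with $u_0>0$ and the Robin-type boundary flux, while $0<v<\gamma$ is exactly estimate \eqref{V:class:est}.

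Finally, the extensibility criterion \eqref{blow:cond} is standard: one extends the solution as long as $\|u(t)\|_{L^\infty(\Omega)}$ remains finite, since a finite $L^\infty$ bound on $u$ propagates through the elliptic solve and the fixed-point construction into a uniform $W^{1,p}$ bound permitting a restart; hence $T_{\max}<\infty$ forces $\limsup_{t\nearrow T_{\max}}\|u(t)\|_{L^\infty(\Omega)}=+\infty$. I expect the main obstacle to be organizing the functional-analytic framework so that both couplings — the cross-diffusion drift $\nabla\cdot(u\nabla v)$ and the inhomogeneous, $v$-dependent boundary flux — are handled simultaneously: choosing between the $w=u\,e^{-v}$ reduction (which homogenizes the boundary condition at the price of an extra $v_t$-type term) and the direct mild formulation, and tuning the fractional power $\theta$ so that the contraction estimates close with integrable powers of $t$. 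The Lipschitz-in-time control of $v[u(t)]$ needed for the $C^{0,1}$ time-regularity statement is the most delicate piece of bookkeeping.
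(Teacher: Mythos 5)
Your decoupling-plus-contraction scheme is precisely the argument that the paper itself does not spell out but delegates to the citation \cite[Theorem 5.1]{KnLa}; in that sense the route is the right one. However, your proposal omits the one substantive device that the paper's own proof does make explicit, and its absence is a genuine gap. Every ingredient of your elliptic step --- unique solvability of $\Delta v=\bar u\,v$, $\partial_\nu v+gv=\gamma g$, the bound $0\le v\le\gamma$, the Lipschitz estimate for $\bar u\mapsto v[\bar u]$ (whose proof tests the equation for the difference and needs the zeroth-order coefficient to have a sign), and the resulting control of $\nabla v$ in the drift term --- requires $\bar u\ge 0$, as you yourself stipulate. But your fixed-point set is a closed ball in $C([0,T];W^{1,p}(\Omega))$ around the constant extension of $u_0$, whose elements are in general sign-changing, so the map $\bar u\mapsto v[\bar u]$ is not even well defined on it: for $\bar u$ negative on a large set the operator $\Delta-\bar u$ with the Robin condition loses coercivity (take $\bar u=-M$ and test with $v\equiv 1$), it may have nontrivial kernel, and the maximum-principle bounds $0\le v\le\gamma$ fail. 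The paper resolves exactly this by running the whole construction for a modified system in which the elliptic equation is replaced by $\Delta v=v(u)_+$; the truncation makes the elliptic problem unconditionally well posed with $0\le v\le\gamma$, and only after the solution exists does the comparison Lemma \ref{comp:lemma} yield $u>0$, whence $(u)_+=u$ and one has a solution of the original system \eqref{systPE}. (A repair closer to your setup would be to take the radius of the ball smaller than $\inf_{\overline\Omega}u_0$ divided by the $W^{1,p}\hookrightarrow L^\infty$ embedding constant, so that all iterates stay strictly positive; but then $T$ depends on $\inf_{\overline\Omega}u_0$ as well, and in any case something must be said.)

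Two further points are related to the same ordering issue. First, you justify $0<v<\gamma$ by appealing to \eqref{V:class:est}, but that estimate is proved under the hypothesis $W\ge 0$, i.e.\ it presupposes the nonnegativity of $u$ that you are trying to establish; in the paper's scheme the truncated equation supplies $0\le v\le\gamma$ unconditionally, and this is what makes the contradiction argument in Lemma \ref{comp:lemma} (which uses $\gamma-v>0$ both at boundary minimum points and in the interior) run without circularity. Second, for the blow-up criterion \eqref{blow:cond} a ``restart'' is not enough as stated: your local existence time depends on $\|u_0\|_{W^{1,p}(\Omega)}$, so you must show explicitly that a finite bound $\sup_{t<T_{\max}}\|u(t)\|_{L^\infty(\Omega)}<\infty$ forces a finite bound on $\|u(t)\|_{W^{1,p}(\Omega)}$ --- e.g.\ bounded $u$ in $L^\infty$ gives $v$ bounded in $C^{1,\sigma}(\overline\Omega)$ by elliptic regularity, and then the Duhamel formula with the smoothing estimates bounds $\|u(t)\|_{W^{1,p}(\Omega)}$; ``propagates through the fixed-point construction'' needs to be turned into this estimate.
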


\begin{proof}
Proceeding like in the proof of \cite[Theorem 5.1]{KnLa} with minor modifications we can show the existence of a unique solution $(u,v)$ of the following modified problem
$$
\left\{ \begin{array}{l}
 u_t=\nabla\cdot(\nabla u-u\nabla v)+\lambda u-\mu u^2, \\
 \partial_\nu u-u\partial_\nu v|_{\partial\Omega}=0,\\
 \Delta v=v(u)_+,\\
 \partial_\nu v|_{\partial\Omega}=(\gamma-v)g\\
 u(0)=u_0>0 
 \end{array} \right.
$$
with the above stated regularity and blow-up condition \eqref{blow:cond}. Now, Lemma \ref{comp:lemma} implies that $u>0$ in $\bar{\Omega}\times [0,T_{\max})$, thus $(u)_+=u,$ and the couple $(u,v)$ is a unique solution of \eqref{systPE}.
\end{proof}

According to the preceding lemma to prove the global existence of solutions, we need to exclude the scenario described by condition \eqref{blow:cond}, we do this in the following lemma. 
\begin{lemma}[Global boundedness of solutions]\label{Glo:ex}
Let $(u,v)$ be a classical solution of \eqref{systPE}  defined on $[0,T_{\max})$, then there exists a time-independent constant
$c_5^*$ such that, 
\begin{equation}
\sup_{t\in[0,T_{\max})}\normA[L^\infty(\Omega)]{u(t)}<c_5^*.
\end{equation}
\end{lemma}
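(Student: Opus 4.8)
The plan is to run a Moser--Alikakos iteration on the $L^p$ norms of $u$, exploiting the fact that the consumption structure $\Delta v=vu$ renders the cross-diffusion term dissipative in the interior, while the Robin boundary contribution is absorbed by the trace inequality \eqref{trace:est}. Throughout I use the pointwise bounds $0\le v\le\gamma$, which follow from the maximum-principle argument of Lemma \ref{V:ex:lem} applied to the elliptic equation for $v(\cdot,t)$ at each fixed $t$, together with the positivity $u>0$ on $[0,T_{\max})$ furnished by Lemma \ref{Loc:Ex}.

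First I would establish a uniform $L^1$ bound. Integrating the first equation of \eqref{systPE} over $\Omega$ annihilates the divergence term by the no-flux boundary condition and leaves $\frac{d}{dt}\int_\Omega u=\lambda\int_\Omega u-\mu\int_\Omega u^2$; since Cauchy--Schwarz gives $\int_\Omega u^2\ge|\Omega|^{-1}(\int_\Omega u)^2$, the quantity $y(t):=\int_\Omega u$ obeys the logistic differential inequality $y'\le\lambda y-\mu|\Omega|^{-1}y^2$ and is therefore bounded uniformly in time by a constant depending only on $\lambda,\mu,|\Omega|$ and $\normA[L^1(\Omega)]{u_0}$.

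Next, for $p\ge2$ I would test the equation with $u^{p-1}$. After one integration by parts the no-flux condition removes the boundary flux and produces the dissipation $-\frac{4(p-1)}{p^2}\int_\Omega|\nabla u^{p/2}|^2$ together with $\frac{p-1}{p}\int_\Omega\nabla u^p\cdot\nabla v$. Integrating this last term by parts once more and inserting $\Delta v=vu$ yields the favourable interior term $-\frac{p-1}{p}\int_\Omega u^{p+1}v\le0$ and the boundary term $\frac{p-1}{p}\int_{\partial\Omega}u^p(\gamma-v)g$, which is nonnegative and dominated by $\frac{p-1}{p}\gamma\normA[L^\infty]{g}\int_{\partial\Omega}u^p$. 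Applying \eqref{trace:est} to $f=u^{p/2}$ with $\varepsilon$ of order $p^{-1}$ absorbs the arising gradient into half the dissipation at the cost of a term of order $p\int_\Omega u^p$, while the logistic source $\lambda\int_\Omega u^p$ is dominated by $\mu\int_\Omega u^{p+1}$ via Young up to an additive constant. Two applications of \eqref{GN:iter:ineq} with $f=u^{p/2}$ then finish the reduction: one absorbs the surviving $\int_\Omega u^p$ into the remaining dissipation, and one converts part of that dissipation into a coercive zeroth-order term, so that
$$
\frac{d}{dt}\int_\Omega u^p+\int_\Omega u^p\le a_p\Big(\int_\Omega u^{p/2}\Big)^2+b_p,
$$
with $a_p,b_p$ growing only polynomially in $p$ (the powers arising from $c_1(\varepsilon)\sim\varepsilon^{-1}$ and $c_1'(\varepsilon)\sim\varepsilon^{-n/2}$ evaluated at $\varepsilon$ of order $p^{-1}$ and $p^{-2}$ respectively).

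With this recursive differential inequality in hand, the final step is the iteration along $p_k=2^k$. Writing $M_k:=\sup_{t\in[0,T_{\max})}\int_\Omega u^{p_k}$ and comparing with the scalar equation $z'+z=\text{RHS}$ gives $M_k\le\max\{\int_\Omega u_0^{p_k},\,a_{p_k}M_{k-1}^2+b_{p_k}\}$; the $L^1$ bound controls $M_0$, and $\int_\Omega u_0^{p_k}\le|\Omega|\,\normA[L^\infty(\Omega)]{u_0}^{p_k}$ is finite because $u_0\in W^{1,p}(\Omega)\hookrightarrow L^\infty(\Omega)$ for $p>n$. Taking $p_k$-th roots and passing to the limit in the standard way yields a time-independent bound on $\lim_k M_k^{1/p_k}=\sup_t\normA[L^\infty(\Omega)]{u(t)}$. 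The main obstacle, and the step demanding the most care, is exactly this bookkeeping: one must check that the uses of \eqref{trace:est} and \eqref{GN:iter:ineq}, each forcing $\varepsilon$ to shrink like a negative power of $p$, keep $a_{p_k}$ growing at most geometrically in $k$, since only then does the recursion $M_k\lesssim C^k M_{k-1}^2$ close to a finite limit. The conceptual point that makes the whole argument dimension-free is the sign of $-\int_\Omega u^{p+1}v$: no gradient estimate on $v$ is needed, and the Robin datum enters only through the harmless, trace-controllable boundary integral.
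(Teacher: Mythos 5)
Your proposal is correct and follows essentially the same route as the paper's proof: uniform $L^1$ bound by logistic ODE comparison, testing with $u^{p-1}$, exploiting $\Delta v = vu \ge 0$ and $\partial_\nu v \le \gamma g$, absorbing the boundary term via \eqref{trace:est} with $\varepsilon \sim p^{-1}$, applying \eqref{GN:iter:ineq}, and closing with a Moser--Alikakos iteration along $p_k = 2^k$. The only cosmetic differences are that the paper drops the term $-\mu p\normA[L^{p+1}(\Omega)]{u}^{p+1}$ outright and absorbs the logistic source through \eqref{GN:iter:ineq} (so no additive constant $b_p$ appears, and your remark that $b_p$ grows polynomially is not needed --- only $b_p^{1/p}$ bounded matters), and it integrates the differential inequality explicitly with the factor $e^{p(p-1)t}$ rather than invoking ODE comparison.
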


\begin{proof}[Proof of Lemma \ref{Glo:ex}]  
Integrating the $u$-equation and using no-flux boundary condition we obtain,
\begin{align*}
\frac{d}{dt}\normA[L^1(\Omega)]{u(t)}&=\lambda \normA[L^1(\Omega)]{u(t)}-\mu\int_\Omega u(\cdot,t)^2\\
&\leq \lambda \normA[L^1(\Omega)]{u(t)}-\frac{\mu}{|\Omega|}\normA[L^1(\Omega)]{u(t)}^2,
\end{align*}
and by ODE comparison we see that 
\begin{equation}\label{L1:uni}
\normA[L^1(\Omega)]{u(t)}\leq \max\left\{\normA[L^1(\Omega)]{u_0}, \frac{|\Omega|\lambda}{\mu}\right\}\quad \text{ for all } t\in[0,T_{\max}). 
\end{equation}
 Now we assume that $p\geq 2$,
 multiply $u$-equation by $pu^{p-1}$ and integrate in space, 
\begin{align*}
\frac{d}{dt}\norm{u(t)}_{L^p(\Omega)}^p &+\frac{4(p-1)}{p}\norm{\nabla u^{\frac{p}{2}}(t)}_{L^2(\Omega)}^2\\ 
&=(p-1)\int_\Omega \nabla (u^p) \cdot\nabla v 
 +\lambda p \norm{u(t)}_{L^p(\Omega)}^p-\mu p \norm{u(t)}_{L^{p+1}}^{p+1}\\
&=(p-1)\left(\int_{\partial\Omega}u^p\partial_\nu v -\int_\Omega u^p\Delta v\right)+\lambda p \norm{u(t)}_{L^p(\Omega)}^p-\mu p \norm{u(t)}_{L^{p+1}(\Omega)}^{p+1}\\
&\leq (p-1)\int_{\partial\Omega}u^p\gamma g\, +\lambda p \norm{u(t)}_{L^p(\Omega)}^p-\mu p \norm{u(t)}_{L^{p+1}(\Omega)}^{p+1}.
\end{align*} 
To estimate the boundary integral term we use \eqref{trace:est} for $u^{\frac{p}{2}}$ with $\varepsilon=\frac{2}{p\gamma \normA[L^\infty]{g}}$ and we get 
\begin{align*}
(p-1)\int_{\partial\Omega}u^p\gamma g&\leq \frac{2(p-1)}{p}\normA[L^2(\Omega)]{\nabla u^{\frac{p}{2}}}^2+(p-1)c_1\left(\frac{2}{p\gamma \normA[L^\infty]{g}}\right)\gamma\normA[L^\infty]{g} \normA[L^p(\Omega)]{u}^p.
\end{align*}
We have
\begin{align*}
\frac{d}{dt}\norm{u(t)}_{L^p(\Omega)}^p &+\frac{2(p-1)}{p}\norm{\nabla u^{\frac{p}{2}}(t)}_{L^2(\Omega)}^2+(p-1)p\normA[L^p(\Omega)]{u(t)}^p\\ 
&\leq \left((p-1)c_1\left(\frac{2}{p\gamma \normA[L^\infty]{g}}\right)\gamma\normA[L^\infty]{g} +\lambda p+p(p-1)\right)\normA[L^p(\Omega)]{u(t)}^p\\
&\leq p(p-1)c^*_1\normA[L^p(\Omega)]{u(t)}^p,
\end{align*}
where $c_1^*=\frac{C_T^4(\gamma\normA[L^\infty]{g})^2}{8}+\lambda+2$ is independent of $p$. Now, using \eqref{GN:iter:ineq} we obtain
\begin{align*}
p(p-1)c^*_1\normA[L^p(\Omega)]{u}^p&\leq \frac{2(p-1)}{p}\normA[L^p(\Omega)]{\nabla u^\frac{p}{2}}^2+p(p-1)c_1'\left(\frac{2}{p^2c_1^*}\right)c_1^*\normA[L^1(\Omega)]{u^\frac{p}{2}}^2\\
&\leq\frac{2(p-1)}{p}\normA[L^p(\Omega)]{\nabla u^\frac{p}{2}}^2+c_2^*p(p-1)(1+p^n)\normA[L^1(\Omega)]{u^\frac{p}{2}}^2
\end{align*}
with $c_2^*=C_4\cdot\max\left\{1,\left(\frac{c_1^*}{2}\right)^{\frac{n}{2}}\right\}\cdot c_1^*$ independent of $p$. Gathering all the above inequalities we end up with the following one
\begin{equation}
\frac{d}{dt}\normA[L^p(\Omega)]{u(t)}^p+p(p-1)\normA[L^p(\Omega)]{u(t)}^p\leq c_2^*p(p-1)(1+p)^n \normA[L^1(\Omega)]{u^\frac{p}{2}(t)}^2
\end{equation}
which can be rewritten into
$$
\frac{d}{dt}\left(e^{p(p-1)t}\normA[L^p(\Omega)]{u(t)}^p\right)\leq e^{p(p-1)t}p(p-1)c_2^*(1+p)^n \normA[L^1(\Omega)]{u^\frac{p}{2}(t)}^2,
$$ 
and after integration on $[0,t]$ we get,
\begin{align*}
\normA[L^p(\Omega)]{u(t)}^p&=e^{-p(p-1)t}\left(\normA[L^p(\Omega)]{u_0}^p+c_2^*(1+p)^n\int_0^te^{p(p-1)s}p(p-1) \normA[L^1(\Omega)]{u^\frac{p}{2}(s)}^2\right)\\
&\leq e^{-p(p-1)t}\left(\normA[L^p(\Omega)]{u_0}^p+c_2^*(1+p)^n\sup_{s\in[0,t]}\normA[L^1(\Omega)]{u^\frac{p}{2}(s)}^2\int_0^te^{p(p-1)s}p(p-1) \right)\\
&\leq \normA[L^p(\Omega)]{u_0}^p+c_2^*(1+p)^n\sup_{s\in[0,t]}\normA[L^1(\Omega)]{u^\frac{p}{2}(s)}^2.
\end{align*}
Denoting 
$$
K(p):=\max\left\{\normA[L^\infty(\Omega)]{u_0},\sup_{s\in[0,t]}\normA[L^p(\Omega)]{u(s)}\right\}
$$
we obtain the following recurrence inequality
\begin{equation}\label{K:rec:ineq}
K(p)\leq [c_3^*(1+p)^n]^{1/p} K(p/2)
\end{equation} 
with $c^*_3=|\Omega|+c_2^*.$ Now we claim that the sequence $\{K(2^j)\}_{j\in\mathbb{N}}$ is bounded. Indeed, from \eqref{K:rec:ineq} we have
\begin{align*}
K(2^j)&\leq [c_3^*(1+2^j)^n]^{\frac{1}{2^j}}[c_3^*(1+2^{j-1})^n]^{\frac{1}{2^{j-1}}}\cdots[c_3^*(1+2)^n]^{\frac{1}{2}}K(1)\\
&\leq {(c_3^*)}^{\sum_{i=1}^j\frac{1}{2^i}}[2^{n(j+1)}]^{\frac{1}{2^j}}[2^{nj}]^{\frac{1}{2^{j-1}}}\cdots[2^{2n}]^{\frac{1}{2}}K(1)\\
&={(c_3^*)}^{\sum_{i=1}^j\frac{1}{2^i}}2^{n\sum_{i=1}^j\frac{i+1}{2^i}}K(1)\\
&\leq c_3^* 2^{4n}K(1)=c_4^*K(1).
\end{align*}
Finally, taking the limit with respect to $j$ and using \eqref{L1:uni}, we conclude that
\begin{align*}
\normA[L^\infty(\Omega)]{u(t)}&\leq c_4^* \max\left\{\normA[L^\infty(\Omega)]{u_0},\sup_{s\in[0,t]}\normA[L^1(\Omega)]{u(s)}\right\}\\
&\leq c_4^*\max\left\{\normA[L^\infty(\Omega)]{u_0},\normA[L^1(\Omega)]{u_0},\frac{\lambda|\Omega|}{\mu}\right\} =:c_5^* 
\end{align*}
for all $t\in[0,T_{\max}).$

\end{proof}
\begin{remark}
By quick inspection of the above proof we infer that $c_5^*$ is an increasing function of $\gamma$, and thus $c_5^*(\gamma)$ is bounded as $\gamma\searrow 0.$
\end{remark}

\begin{proof}[Proof of Theorem \ref{Evol1:Thm}]
The proof follows from Lemma \ref{Loc:Ex}, Lemma \ref{Glo:ex}. The positivity of $u(x,t)$ is shown in Lemma \ref{comp:lemma}, using this and the maximum principle (as in the proof of Lemma \ref{V:ex:lem})  we obtain estimates for $v(x,t).$
\end{proof}

\section{Asymptotic behavior of solutions}

Before we prove the convergence of solution to the steady state, we recall the following result being a consequence of the maximum principle.
\begin{lemma}\label{comp:lemma}
Let $(u,v)$ be a solution of \eqref{systPE} with $u_0>0$ defined on the maximal time interval $[0,T_{\max})$. Then the solution $y(t)$ of the following initial vale problem 
$$
\left\{\begin{array}{l}y'=\lambda y-(\mu+\gamma)y^2\\
y(0)=\inf_{x\in\overline{\Omega}} u_0
\end{array}\right.
$$
is a sub-solution of $u$, that is 
\begin{equation}\label{u:low:bound}
u(x,t)\geq y(t),\quad\text{ for all } (x,t)\in\overline{\Omega}\times[0,T_{\max}).
\end{equation}
\end{lemma}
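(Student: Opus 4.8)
The plan is to read $y$ as a spatially homogeneous subsolution of the equation for $u$ and then invoke the parabolic comparison principle. First I would put the $u$-equation in non-divergence form: expanding $\nabla\cdot(\nabla u-u\nabla v)=\Delta u-\nabla v\cdot\nabla u-u\Delta v$ and substituting $\Delta v=vu$ gives
$$u_t=\Delta u-\nabla v\cdot\nabla u+\lambda u-(\mu+v)u^2\qquad\text{in }\Omega\times(0,T_{\max}).$$
Here $v$ obeys $0\le v\le\gamma$, which follows from the maximum principle applied to the $v$-equation exactly as in the first part of the proof of Lemma \ref{V:ex:lem}; crucially this bound uses only the sign of the coupling term and not the positivity of $u$. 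I also record the boundary identity $\partial_\nu u=u\,\partial_\nu v=u(\gamma-v)g$, obtained from the no-flux condition on $u$ and the Robin condition on $v$, together with $(\gamma-v)g\ge0$.

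The second step is positivity of $u$. Since every zeroth-order term carries a factor of $u$, the function $u$ solves the linear homogeneous problem
$$u_t-\Delta u+\nabla v\cdot\nabla u+c_0(x,t)\,u=0,\qquad c_0:=(\mu+v)u-\lambda,\qquad \partial_\nu u=(\gamma-v)g\,u,$$
where $c_0$ and the drift $\nabla v$ are bounded on each slab $\overline{\Omega}\times[0,T]$, $T<T_{\max}$, by the continuity (hence local boundedness) of $u$ and $v$ guaranteed by Lemma \ref{Loc:Ex}. As $u_0>0$, the parabolic maximum principle, after the standard exponential substitution $u=e^{\kappa t}\tilde u$ rendering the zeroth-order coefficient non-negative, excludes a first interior zero, while at a first boundary zero the parabolic Hopf lemma would force $\partial_\nu u<0$, contradicting $\partial_\nu u=(\gamma-v)g\,u=0$ there. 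Hence $u>0$ on $\overline{\Omega}\times[0,T_{\max})$, and in particular $\partial_\nu u=u(\gamma-v)g\ge0$.

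With the favourable boundary sign in hand I would compare $u$ with $y$. Because $u^2\ge0$ and $v\le\gamma$, the identity above yields the supersolution inequality $u_t-\Delta u+\nabla v\cdot\nabla u-\lambda u+(\mu+\gamma)u^2\ge0$, whereas the spatially constant $y$ satisfies the corresponding equation with equality (its gradient and Laplacian vanish and its ODE is exactly $y'-\lambda y+(\mu+\gamma)y^2=0$). Setting $z:=u-y$ and using $u^2-y^2=(u+y)z$, one finds
$$z_t-\Delta z+\nabla v\cdot\nabla z-\big(\lambda-(\mu+\gamma)(u+y)\big)z\ge0,\qquad \partial_\nu z=u(\gamma-v)g\ge0,$$
with $z(\cdot,0)=u_0-\inf_{\overline{\Omega}}u_0\ge0$. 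The coefficient $\lambda-(\mu+\gamma)(u+y)$ is bounded on each slab, so the same minimum-principle argument as above (exponential shift making the zeroth-order coefficient non-negative, weak maximum principle inside $\Omega$, Hopf lemma on $\partial\Omega$ against $\partial_\nu z\ge0$) gives $z\ge0$, that is \eqref{u:low:bound}; letting $T\nearrow T_{\max}$ finishes the proof.

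I expect the main obstacle to be exactly the coupling flagged above: the boundary flux $\partial_\nu u=u(\gamma-v)g$ has the sign of $u$, so the clean Hopf contradiction in the comparison step is available only once positivity has been established independently. The point is that positivity can be obtained from the \emph{linear homogeneous} formulation of the $u$-equation, which needs no information about the sign of the flux; after that, the comparison with $y$ is a routine application of the parabolic maximum principle, and the cross-diffusion term is harmless since it contributes only the bounded first-order drift $\nabla v\cdot\nabla z$. An alternative that avoids explicitly ordering the two steps is a bootstrap on $T^\ast:=\sup\{t:u\ge y\text{ on }[0,t]\}$, but separating positivity from the comparison is cleaner to write.
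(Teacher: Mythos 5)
Your overall architecture (positivity of $u$ first, then comparison with $y$) is sound, but one step is justified by a false claim. You assert that $0\le v\le\gamma$ follows from the maximum principle ``using only the sign of the coupling term and not the positivity of $u$''. That cannot be right: the coupling term in $\Delta v=vu$ is $vu$, and its sign is governed exactly by the sign of $u$. In the testing argument of Lemma \ref{V:ex:lem}, the volume term $\int_\Omega(z_1+\gamma)\,u\,(z_1)_+^2$ (with $z_1=v-\gamma$) has a favourable sign only because the coefficient $W$ is nonnegative; with a sign-changing coefficient $u$ neither $v\le\gamma$ nor $v\ge0$ can be extracted this way, so your step 1 as written is unjustified. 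The repair, however, is already contained in your own step 2: your positivity argument for $u$ (exponential shift, strong minimum principle, parabolic Hopf lemma applied to the linear homogeneous formulation) genuinely uses only boundedness of $u$, $v$, $\nabla v$ on slabs $\overline{\Omega}\times[0,T]$ with $T<T_{\max}$, and no sign information on $v$; in particular, at a first boundary zero the flux $\partial_\nu u=(\gamma-v)gu$ vanishes irrespective of the sign of $\gamma-v$. So reorder: prove $u>0$ first, then deduce $0\le v\le\gamma$ (now legitimately, since $u\ge0$), then run the comparison with $y$. With that permutation your proof is correct. (The paper dodges this chicken-and-egg issue differently: in Lemma \ref{Loc:Ex} the local solution is built from the modified system with $\Delta v=v(u)_+$, so the bounds on $v$ are available unconditionally.)

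Once repaired, your route is genuinely different from the paper's. The paper never establishes positivity of $u$ as a separate step; it compares $u$ directly with $y_\varepsilon$, the solution of the same ODE with shifted datum $\inf_{\overline{\Omega}}u_0-\varepsilon$, and derives a contradiction at a first touching point $(x_0,t_0)$ of $w=u-y_\varepsilon$. The shift guarantees $w(\cdot,0)>0$, and the key trick is that at the touching point one knows $u(x_0,t_0)=y_\varepsilon(t_0)>0$, so the problematic boundary flux $\partial_\nu u=u(\gamma-v)g$ has a strictly positive sign precisely where it is needed, contradicting $\partial_\nu w\le 0$ at a boundary minimum; at an interior touching point the equation yields $0\ge w_t(x_0,t_0)\ge(\gamma-v(x_0,t_0))\,y_\varepsilon^2(t_0)>0$. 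Letting $\varepsilon\searrow0$ gives \eqref{u:low:bound}, and $u>0$ is then a corollary (since $y>0$) rather than a prerequisite. Your two-stage version is longer but more modular and makes the logical dependencies explicit; the paper's is shorter but leans on $v<\gamma$ as standing knowledge, which in its own logic comes from the modified problem of Lemma \ref{Loc:Ex}.
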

\begin{proof}
We prove the statement by contradiction. Let $y_\varepsilon(t)$ be a solution of the problem
$$
\left\{\begin{array}{l}y_\varepsilon'=\lambda y_\varepsilon-(\mu+\gamma)y_\varepsilon^2\\
y_\varepsilon(0)=\inf_{x\in\overline{\Omega}} u_0-\varepsilon
\end{array}\right.
$$
with $\varepsilon\in(0,\inf_{x\in\Omega}u_0),$ and $w(x,t):=u(x,t)-y_\varepsilon(t).$ Function $w(x,t)$ solves the following parabolic equation
\begin{equation}\label{maxprinc:aux}
w_t=\Delta w-\nabla w\cdot\nabla v+\lambda w-(\mu+v)u^2+(\mu+\gamma)y_\varepsilon^2\quad\text{ in }\Omega\times(0,T_{\max})
\end{equation}
Suppose that the set $S=\{(x,t)\in\overline{\Omega}\times[0,T_{\max}):w(x,t)< 0\}$ is not empty. It is clear that $w(x,0)>0$ for all $x\in\overline{\Omega}$, and thus, by continuity of $w,$ there exists $(x_0,t_0)\in\partial S$ ($w(x_0,t_0)=0$) such that $w(x,t)>0$ for $(x,t)\in\overline{\Omega}\times[0,t_0)$ and $w(x,t_0)\geq 0$ for $x\in\overline{\Omega}$. The function $w(x,t_0)$ is nonnegative and thus reaches a minimum at $x_0\in\overline{\Omega}$. If we assume that $x_0\in\partial\Omega$, then since $g>0$ on $\partial\Omega$ we have
$$
0\geq\partial_\nu w(x_0,t_0)=\partial_\nu u(x_0,t_0)=u(x_0,t_0)\partial_\nu v(x_0,t_0)=y_\varepsilon(t_0)(\gamma-v(x_0,t_0))g(x_0)>0,
$$  
a contradiction. 
Assuming $x_0\in\Omega$ we have $w_t(x_0,t_0)\leq 0$, $\Delta w(x_0,t_0)\geq 0,$ $\nabla w(x_0,t_0)=0$ and from \eqref{maxprinc:aux} we obtain
$$
0\geq w_t(x_0,t_0)\geq (\gamma-v(x_0,t_0))y_\varepsilon^2(t_0)>0,
$$
since $0<v(x,t)<\gamma$, again a contradiction. Since $\varepsilon$ is arbitrary, we obtain \eqref{u:low:bound}.  
\end{proof}

Now we proceed to the proof of Theorem \ref{Evol2:Thm}.

\begin{proof}[Proof of Theorem \ref{Evol2:Thm}]
We consider the differences $\tilde{u}=u-U$, $\tilde{v}=v-V$. Since $\tilde{v}$ is a solution of
\begin{equation}\label{v:diff:evol:eq}
\Delta\tilde{v}-\tilde{v}U=v\tilde{u},\quad \partial_{\nu}\tilde{v}|_{\partial\Omega}=-\tilde{v}g,
\end{equation}
proceeding like in the proof of Lemma \ref{uniq:std} we obtain the inequality,
\begin{equation}\label{v:diff:evol:ineq}
\int_{\partial\Omega}\tilde{v}^2g+\int_\Omega|\nabla \tilde{v}|^2+\frac{\lambda}{2\mu}e^{-\gamma}\int_\Omega \tilde{v}^2\leq \gamma^2\frac{\mu}{2\lambda}e^\gamma \int_\Omega \tilde{u}^2
\end{equation}
The difference $\tilde{u}$ fulfills the equation
\begin{equation}
\tilde{u}_t=\nabla\cdot(\nabla \tilde{u}-(\tilde{u}\nabla v+U\nabla \tilde{v}))+\mu \tilde{u}\left(\frac{\lambda}{\mu}-
u-U\right) 
\end{equation}
with the boundary condition
$$
\left(\partial_\nu \tilde{u}-(\tilde{u}\partial_{\nu} v+ U\partial_{\nu}\tilde{v})\right)|_{\partial\Omega}=0.
$$
Multiplying this equation by $2\tilde{u}$, integrating on $\Omega$ and using $u,v>0$ we obtain
\begin{align*}
\frac{d}{dt}\int_\Omega\tilde{u}^2 &+2\int_\Omega|\nabla \tilde{u}|^2+2\mu\int_\Omega\tilde{u}^2\left(U+u-\frac{\lambda}{\mu}\right)=\int_\Omega\nabla\tilde{u}^2\cdot\nabla v+2\int_\Omega U\nabla \tilde{u}\cdot\nabla \tilde{v}\\
&=\int_{\partial\Omega}\tilde{u}^2\partial_\nu v-\int_\Omega \tilde{u}^2\Delta v+ 2\int_\Omega U\nabla \tilde{u}\cdot\nabla \tilde{v}\\
&\leq \gamma \normA[L^\infty]{g}\int_{\partial\Omega}\tilde{u}^2+\int_\Omega|\nabla\tilde{u}|^2+ \left(\frac{\lambda}{\mu}e^\gamma\right)^2\int_\Omega |\nabla\tilde{v}|^2  
\end{align*}

Using \eqref{v:diff:evol:ineq}, \eqref{trace:est}  we arrive at
\begin{equation}\label{ev:inequality}
\frac{d}{dt}\int_\Omega\tilde{u}^2 +\mathcal{F}_{1}(\gamma)\int_\Omega|\nabla \tilde{u}|^2+2\mu \mathcal{F}_{e2}(\gamma)\int_\Omega\tilde{u}^2\leq 0
\end{equation}
with $\mathcal{F}_{1}(\gamma)$ defined in the proof of Lemma \ref{uniq:std}, and
\begin{align*}
\mathcal{F}_{e2}(\gamma)&=\frac{\lambda}{\mu}(e^{-\gamma}-1)+y-\gamma^2\frac{e^\gamma}{4\lambda}\left(\frac{\lambda}{\mu}e^\gamma\right)^2-\gamma\frac{\normA[L^\infty]{g}c_1\left(\frac{1}{2}\right)}{2\mu}\\
&\geq \frac{\lambda}{\mu}(e^{-\gamma}-1)+\min\left\{\inf_{x\in\overline{\Omega}}u_0,\frac{\lambda}{\mu+\gamma}\right\}-\gamma^2\frac{e^\gamma}{4\lambda}\left(\frac{\lambda}{\mu}e^\gamma\right)^2-\gamma\frac{\normA[L^\infty]{g}c_1\left(\frac{1}{2}\right)}{2\mu}\\
&=\mathcal{F}_{e2*}(\gamma)
\end{align*}
Since $\lim_{\gamma\searrow 0}\mathcal{F}_{e2*}(\gamma)=\min\left\{\inf_{x\in\overline{\Omega}}u_0,\frac{\lambda}{\mu}\right\}>0$ 
we see that there exists $\frac{2}{\normA[L^\infty]{g}}>{\gamma^\star}'(\lambda,\mu,\normA[L^{\infty}(\Omega)]{u})>0$ such that for $\gamma< {\gamma^\star}'$ we have
$$
\mathcal{F}_{e2*}(\gamma)>0,\quad \mathcal{F}_{1}(\gamma)>0,
$$
Therefore from \eqref{ev:inequality} we infer
$$
\frac{d}{dt}\int_\Omega\tilde{u}^2 +2\mu \mathcal{F}_{e2*}(\gamma)\int_\Omega\tilde{u}^2\leq 0,
$$
thus
$$
\normA[L^2(\Omega)]{\tilde{u}(t)}\leq e^{-\mu\mathcal{F}_{e2*}(\gamma) t }\normA[L^2(\Omega)]{\tilde{u}_0},
$$ 
and by the H{\" o}lder inequality, Lemma \ref{Glo:ex} for $s\geq 2$ we have
\begin{align*}
\normA[L^s(\Omega)]{\tilde{u}}&\leq \normA[L^2(\Omega)]{\tilde{u}}^{\frac{2}{s}}\normA[L^\infty(\Omega)]{\tilde{u}}^{1-\frac{2}{s}} \\
&\leq e^{-\frac{2}{s}\mu\mathcal{F}_{e2*}(\gamma) t }\normA[L^2(\Omega)]{\tilde{u}_0}^{\frac{2}{s}}\left(c_5^*+\frac{\lambda}{\mu}e^{\gamma}\right)^{1-\frac{2}{s}} \\
&= e^{-\frac{2}{s}\mu\mathcal{F}_{e2*}(\gamma) t }c^*_6. \\
\end{align*}
Using this and elliptic regularity for equation \eqref{v:diff:evol:eq} (\cite[Theorem 2.4.2.6]{Grisvard}) we find a constant $C_4(s)$  such that,
$$
\normA[W^{2,s}(\Omega)]{\tilde{v}(t)}\leq \gamma C_4(s)c^*_6e^{-\frac{2}{s}\mu\mathcal{F}_{e2*}(\gamma) t }=c_7^*e^{-\frac{2}{s}\mu\mathcal{F}_{e2*}(\gamma) t }. 
$$
\end{proof}

\footnotesize
\def\cprime{$'$}

\end{document}